\newtheorem{theorem}{Theorem}
\newtheorem{lemma}[theorem]{Lemma}
\newtheorem{remark}{Remark}
\newtheorem{proposition}{Proposition}
\newenvironment{proof}[1][Proof]{\noindent\textbf{#1.} }{\ \rule{0.5em}{0.5em}}
\begin{document}

%\title[Gradual time reversal in TAT/PAT]{Gradual time reversal in thermo-
%and photo- acoustic tomography within a resonant cavity}

\title{Gradual time reversal in thermo-
and photo- acoustic tomography within a resonant cavity}

\author{B. Holman and L. Kunyansky}

\maketitle

\begin{abstract}
Thermo- and photo- acoustic tomography require reconstructing initial
acoustic pressure in a body from time series of pressure measured on a
surface surrounding the body. For the classical case of free space wave
propagation, various reconstruction techniques are well known. However, some
novel measurement schemes place the object of interest between reflecting
walls that form a de facto resonant cavity. In this case, known methods
(including the popular time reversal algorithm) cannot be used. The inverse
problem involving reflecting walls can be solved by the \emph{gradual time
reversal} method we propose here. It consists in solving back in time on the
interval $[0,T]$ the initial/boundary value problem for the wave equation,
with the Dirichlet boundary data multiplied by a smooth cut-off function. If
$T$ is sufficiently large one obtains a good approximation to the initial
pressure; in the limit of large $T$ such an approximation converges (under
certain conditions) to the exact solution.
\end{abstract}

%\author{B. Holman and L. Kunyansky}
%
%\address{
%Department of Mathematics,
%University of Arizona,
%Tucson, AZ 85721,
%USA
%}
%
%\ead{leonk@math.arizona.edu}
%
%
%\ams{44A12, 92C55, 65R32}
%
%\textit{Keywords}: photoacoustic tomography, thermoacoustic tomography, time
%reversal, resonant cavity, reflecting walls, wave equation

\section*{Introduction}

Photoacoustic tomography (PAT)~\cite{KrugerPAT,Oraev94,Beard2011} and
thermoacoustic tomography (TAT)~\cite{KrugerTAT,WangCRC} are based on the
thermoacoustic effect: when a material is heated it expands. To perform
measurements, a biological object is submerged in water (or hydroacoustic
gel) and is illuminated with a short electromagnetic pulse that heats the
tissue. The resulting thermoacoustic expansion generates an outgoing
acoustic wave, whose pressure is measured on a surface (completely or
partially) surrounding the object. Next, an inverse problem is solved in
order to image the initial acoustic pressure within the object. This
pressure is closely related, in particular, to the blood content in tissues.
Blood vessels an cancerous tumors produce much higher pressure; accordingly,
TAT and PAT are effective for cancer detection and for imaging vasculature
in small animals.

During the past decade, the mathematical foundations of TAT and PAT have
been well investigated. Significant achievements include, in particular,
results on general solvability and stability of the underlying inverse
problem ~\cite{AK,AKQ,AQ,AmbKu,QuiRull,AFK,AN,SU}, explicit inversion
formulas ~\cite%
{Finch04,Finch07,Kunyansky,Kun-cube,Nguyen,MXW2,Salman,Pala,Nat12,QSU}, and
efficient computational methods ~\cite%
{Norton1,Norton2,Kun-ser,XuWang04,HKN,burg-exac-appro}. One of the active
areas of current research is the so-called quantitative PAT (QPAT) \cite%
{QPAT_CLA,QPAT_PCA,QPAT_TPC} which aims to recover, in addition to the
initial pressure, optical properties of the tissue (e.g., Gr\"uneisen
coefficient) and the fluency of electromagnetic radiation as it propagates
through inhomogeneous tissue.

However, practically all existing theory of TAT/PAT is based on the
assumption that acoustic waves propagate in free space, and that reflections
from detectors and the walls of the water tank can be either neglected or
gated out. In the latter case, acoustic pressure $p(t,x)$ within the object
vanishes quite fast (in a finite time if the speed of sound is constant
within the domain). In this case the inverse problem of TAT/PAT can be
solved by ``time reversal", i.e. by solving back in time the
initial/boundary value problem for the wave equation, with the Dirichlet
boundary values equal to the measured data (see, for example, \cite%
{XuWang04,HKN,burg-exac-appro}). Vanishing (or sufficient decrease) of the
pressure in finite time $T$ allows one to initialize this process by setting
$p(T,x)$ and its time derivative $p_{t}(T,x)$ to zero within the domain~$%
\Omega $. Time reversal yields a theoretically exact reconstruction; it can
be implemented in a general closed domain (even with a variable speed of
sound) using finite differences; it can also be realized (for simple
domains) using the method of separation of variables, or, for certain
geometries, replaced by equivalent explicit backprojection formulas. Other
reconstruction algorithms, although not related directly to time reversal,
also require the vanishing of pressure.

However, free space propagation cannot always be used as a valid model. For
example, one of the most advanced PAT acquisition schemes (developed by
researchers from the University College London~\cite{Ell-Cox-apparatus})
uses optically scanned planar glass surfaces for the detection of acoustic
signals. Such surfaces act as (almost) perfect acoustic mirrors. If the
object is surrounded by such reflecting detectors (or by a combination of
detectors and acoustic mirrors), wave propagation occurs in a resonant
cavity. It involves multiple reflections of waves from the walls, and, if
the dissipation of waves is neglected, the acoustic oscillations will never
end. Traditional time reversal and other existing techniques are not
applicable in this case; new reconstruction algorithms need to be developed
for TAT/PAT\ within resonant cavities.

In \cite{Cox} the authors jointly with B. T. Cox developed such an algorithm
for a rectangular resonant cavity. That method is based on the Fast Fourier
transform and is computationally very efficient; however, it is not easily
exteneded to other geometries, and it cannot handle the case of variable
speed of sound. (Other approaches to the inverse problem within resonant
cavity include \cite{Cox2007,Cox2009,WangYang07}; in \cite{Ammari} an
approximate solution is obtained assuming that the sources of sound wave are
small inclusions.)

In the present paper we investigate the possibility of solving the inverse
problem of TAT/PAT in a resonant cavity by a modified time reversal
technique. Here we understand time reversal in a general sense, without
specifying the particular computational technique used to solve the
underlying initial/boundary value problem numerically (although in our
simulations we used an algorithm based of finite differences). Since (in the
idealized setting) the acoustic energy is preserved within the domain,
initializing classical time reversal by setting $p_{t}(T,x)=p(T,x)=0$ for
any value of $T$ would introduce an error of the same order of magnitude as
the initial pressure we seek. Instead, we propose a version of time reversal
where the boundary data are multiplied by a smooth cut-off function equal to
1 at times $t$ close to 0 and vanishing at $t=T$ together with all (or, at
least, several) derivatives. This technique, which we call \emph{gradual
time reversal,} can be initialized by $p_{t}(T,x)=p(T,x)=0$. As we show in
the paper, such an approach yields a good approximation to the sought
initial pressure $p(0,x)$ if $T$ is sufficiently large. Moreover, under
rather generic conditions this approximation converges to $p(0,x)$ in the
limit $T\rightarrow \infty .$

The rest of the paper is organized as follows. In the next section we give a
precise formulation of the problem. Section \ref{S:general} presents the
gradual time reversal algorithm and the theorems establishing weak
convergence of this technique under some rather generic conditions. In
Section \ref{S:particular} we consider circular and rectangular domains
where stronger convergence results can be obtained; in particular, we prove
strong convergence in $H^{1}(\Omega )$ of gradual time reversal in a
circular domain. Several results of numerical simulations are also presented
in the latter section to demonstrate the practicality of the present method.
The paper is concluded with an Appendix containing an auxiliary theorem on
relative spacing of the zeros of Bessel functions and their derivatives
(needed in Section~\ref{S:particular} to analyze convergence of graduate
time reversal in a circular domain).

\section{Formulation of the problem}

The pressure differential $u(t,x)$ within a reverberant cavity is a solution
to the following initial/boundary value problem:%
\begin{equation}
\left\{
\begin{array}{ccc}
\frac{1}{c^{2}(x)}\frac{\partial^{2}}{\partial t^{2}}u(t,x)=\Delta u(t,x), &
x\in\Omega, & t\in\lbrack0,\infty), \\
u(0,x)=f(x),\quad\frac{\partial u}{\partial t}(0,x)=0, & x\in\Omega, &  \\
\frac{\partial u(t,z)}{\partial\mathbf{n}}=0, & z\in\Sigma, & t\in
\lbrack0,\infty).%
\end{array}
\right.  \label{E:forward-system}
\end{equation}
where $\Sigma$ is the boundary of the bounded domain $\Omega\subset \mathbb{R%
}^{d}$ formed by the walls of the cavity, $c(x)$ is the known speed of sound
within the cavity, $\mathbf{n}$ is the exterior normal to $\Omega$, and $%
\frac{\partial u}{\partial\mathbf{n}}$ is the normal derivative of $u$. The
measured data $U(t,z)$ coincides with $u(t,z)$ on a part of the boundary $%
\Sigma_{1}\subseteq$ $\Sigma$ ($\Sigma_{1}$ may in some cases coincide with
the whole $\Sigma)$:%
\begin{equation}
U(t,z)=u(t,z),\quad z\in\Sigma_{1},\quad t\in\lbrack0,\infty).  \notag
\end{equation}
Our goal is to reconstruct the initial condition $f(x)$ from $U(t,z)$.

In the setting of traditional PAT/TAT, wave propagation occurs in the whole
space (the reflection from the detectors is assumed to be negligible). In
the simplest case of 3D wave propagation with constant speed of sound the
pressure vanishes in $\bar{\Omega}$ after a finite time $t=T$. If, in
addition, $\Sigma _{1}=\Sigma$, the initial pressure $f(x)$ can be found by
time reversal, i.e. by solving the wave equation in $Q_{T}\equiv \Omega
\times \lbrack 0,T]$ back in time from $t=T$ to $t=0.$ \ One imposes on such
a solution $\tilde{u}$ initial conditions $\tilde{u}(T,x)=0$ and $\frac{%
\partial \tilde{u}}{\partial t}(T,x)=0,$ and forces $\tilde{u}(t,z)$ on $%
\Sigma \times \lbrack 0,T]$ to be equal to the measured data $U(t,z).$ Then,
so-computed values of $\tilde{u}(0,x)$ coincide with $f(x).$ This method
also works in 2D and/or if the speed of sound is variable (but
non-trapping), in the limit of a large $T.$

However, in the case of perfectly reflecting boundaries we consider here,
the energy of the acoustic waves is preserved, and $u(t,x)$ remains of the
same order of magnitude for all values of $t\in \lbrack 0,\infty ).$ Since
values of the pressure (and its time derivative) inside $\Omega $ cannot be
measured, there is no accurate way of initializing time reversal. Simply
replacing the unknown values $u(T,x)$ and $\frac{\partial u}{\partial t}%
(T,x) $ by zero would introduce an error proportional to the energy of the
acoustic waves at the time $T,$ which will propagate toward $t=0$ and create
artifacts roughly of the same order of magnitude as $f(x).$

Below, we show that a good approximation to $f(x)$ can be obtained by
solving a modified time reversal problem; %for the lack of a better term
we will call this technique \emph{gradual time reversal}.

\section{Gradual time reversal: general considerations\label{S:general}}

Let us introduce an infinitely smooth cut-off function $\alpha (t)$ defined
on $[0,1],$ identically equal to $1$ within some neighborhood of $0,$ and
vanishing with all its derivatives at $1.$ Gradual time reversal consists in
solving back in time the initial/boundary value problem for the wave
equation with zero initial conditions at $t=T,$ and boundary conditions
equal to $U(t,z)\alpha (\varepsilon t)$ where $\varepsilon =1/T$ on $\Sigma
_{1}.$ On the rest of the boundary $\Sigma _{2}\equiv \Sigma \setminus
\Sigma _{1}$, where Dirichlet data are not available, we impose zero Neumann
boundary conditions:%
\begin{equation}
\left\{
\begin{array}{lccc}
\frac{1}{c^{2}(x)}\frac{\partial ^{2}v_{\varepsilon }}{\partial t^{2}}%
(t,x)=\Delta v(t,x), & x\in \Omega , & t\in \lbrack 0,T], & T=1/\varepsilon ,
\\
v_{\varepsilon }(T,x)=0,\quad \frac{\partial v_{\varepsilon }}{\partial t}%
(T,x)=0, & x\in \Omega , &  &  \\
v_{\varepsilon }(t,z)=U(t,z)\alpha (\varepsilon t), & z\in \Sigma _{1}, &
t\in \lbrack 0,T], &  \\
\frac{\partial v(t,z)}{\partial \mathbf{n}}=0 & z\in \Sigma _{2}, & t\in
\lbrack 0,T]. &
\end{array}%
\right.  \label{E:inversesystem}
\end{equation}%
As we show below, for sufficiently small values of $\varepsilon $ (or,
equivalently, large values of $T$)$,$ $v_{\varepsilon }(0,z)$ is a good
approximation to $f(x),$ and in fact, under certain conditions $%
v_{\varepsilon }(0,z)$ converges to $f(x)$, as $\varepsilon \rightarrow 0.$

\subsection{Some facts about the forward problem}

In the rest of the paper we will assume that the speed of sound $c(x)$ is \
a known, twice differentiable function bounded from above and below in $%
\Omega $:%
\begin{equation}
0<c_{\min }\leq c(x)\leq c_{\max },\quad \forall x\in \bar{\Omega},  \notag
\end{equation}%
and the boundary $\Sigma $ is piece-wise smooth. The initial condition $f$
is assumed to be compactly supported within $\Omega $ and be an element of
the Hilbert space $H^{1}(\Omega )$ with the inner product $[.,.]_{H^{1}}$
and the norm $||h||_{H^{1}}$ defined, for any $\forall g,h\in H^{1}(\Omega )$%
, as follows
\begin{equation}
\lbrack g,h]_{H^{1}}\equiv \int\limits_{\Omega }\left\{ \frac{1}{c^{2}(x)}%
g(x)\overline{h(x)}+\nabla g(x)\cdot \nabla \overline{h(x)}\right\} dx,\quad
||h||_{H^{1}}\equiv \sqrt{\lbrack h,h]_{H^{1}}}.  \notag
\end{equation}

%\begin{align*}
%a  & =b\\
%c  & =d
%\end{align*}

Since a classical solution of the wave equation may not exist under these
assumptions, we will understand the wave equation~(\ref{E:forward-system},
first line) in the weak sense:
\begin{equation}
\left( \frac{1}{c^{2}}\frac{\partial ^{2}u}{\partial t^{2}}(t,\cdot ),\eta
(\cdot )\right) _{L_{2}}+\left( \nabla u(t,\cdot ),\nabla \eta (\cdot
)\right) _{L_{2}}=0,\text{ }\forall \eta (x)\in C_{0}^{\infty }(\Omega
),\quad t\in (0,T).  \label{E:week-u}
\end{equation}%
where $(.,.)_{L_{2}}$ stands for the$\ $inner product in $L_{2}(\Omega )$
when applied to scalar functions:%
\begin{equation}
(g,h)_{L_{2}}\equiv \int\limits_{\Omega }g(x)\overline{h(x)}dx,\quad \forall
g,h\in L_{2}(\Omega );  \notag
\end{equation}%
expression $\left( \nabla g,\nabla h\right) _{L_{2}}$ is understood as
follows%
\begin{equation}
(\nabla g,\nabla h)_{L_{2}}\equiv \int\limits_{\Omega }\nabla g(x)\cdot
\overline{\nabla h(x)}dx,\quad \forall g,h\in H^{1}(\Omega ).
\label{E:inner-grad}
\end{equation}

It is known \cite{Ladyzh} that under these conditions there exists a unique
solution $u(t,x)$ of~(\ref{E:week-u}) in the class $C(0,T;H^{1}(\Omega ))$
on $Q_{T}\equiv (0,T)\times \Omega ,$ whose time derivative $\frac{\partial u%
}{\partial t}(t,x)$ and the first order space derivatives $\frac{\partial u}{%
\partial x_{j}}(t,x)$ are $L_{2}$ functions on $Q_{T}.$

Using separation of variables, this solution can be found in the form of a
generalized Fourier series. In order to accomplish this, one finds the
eigenfunctions $\varphi _{n}(x)$ of the weighted Neumann Laplacian on $%
\Omega $:%
\begin{equation}
-\Delta \varphi _{n}(x)=\frac{1}{c^{2}(x)}\lambda _{n}^{2}\varphi
_{n}(x),\quad \left. \frac{\partial \varphi _{n}(z)}{\partial \mathbf{n}}%
\right\vert _{\partial \Omega }=0, \quad n=1,2,3,..., \   \notag
\end{equation}%
where $\lambda _{n}^{2}$ are the corresponding eigenvalues, in
non-decreasing order with $\lambda _{1}=0$ and $\lambda _{2}>0.$ In general,
eigenfunctions $\varphi _{n}(x)$ can be found in the class $H^{1}(\Omega );$
they are pair-wise orthogonal with respect to the weighted $L_{2}$ inner
product%
\begin{equation}
<g,h>_{c^{-2}}\equiv \int\limits_{\Omega }\frac{1}{c^{2}(x)}g(x)\overline{%
h(x)}dx,\quad \forall g,h\in L_{2}(\Omega ,c^{-2}(x)).  \label{E:innerpr}
\end{equation}%
so that
\begin{equation}
<\varphi _{l},\varphi _{n}>_{c^{-2}}=0,\text{ if }l\neq n.  \label{E:ortho1}
\end{equation}%
Assume that these eigenfunctions are normalized with respect to the weighted
$L_{2}$ norm:%
\begin{equation}
||\varphi _{l}||_{c^{-2}}\equiv \sqrt{<\varphi _{l},\varphi _{l}>_{c^{-2}}}%
=1,\text{\qquad }l=1,2,3,....  \label{E:ortho2}
\end{equation}%
It is known that, with such normalization, these eigenfunctions are also
orthogonal with respect to the inner product given by equation (\ref%
{E:inner-grad}):
\begin{equation}
(\nabla \varphi _{l},\nabla \varphi _{n})_{L_{2}}=0,\text{ if }l\neq
n,\qquad (\nabla \varphi _{l},\nabla \varphi _{l})_{L_{2}}=\lambda _{l}^{2},%
\text{ }l=1,2,3,....,  \label{E:ortho4}
\end{equation}

By utilizing $\varphi _{n}$'s, the weak solution of (\ref{E:week-u}) can be
found in the form of the series%
\begin{equation}
u(t,x)=\sum_{n=1}^{\infty }u_{n}\varphi _{n}(x)\cos (\lambda _{n}t),
\label{E:Useries}
\end{equation}%
with Fourier coefficients $u_{n}$ found from the initial condition%
\begin{equation}
u_{n}=<f,\varphi _{n}>_{c^{-2}},\quad n=1,2,3,...  \notag
\end{equation}%
We note the Parseval's identity and a related inequality%
\begin{eqnarray}
||u(0,\cdot )||_{c^{-2}}^{2} &=&\sum_{n=1}^{\infty }|u_{n}|^{2},
\label{E:Parseval} \\
||u(0,t)||_{c^{-2}}^{2} &=&\sum_{n=1}^{\infty }|u_{n}\cos (\lambda
_{n}t)|^{2}\leq \sum_{n=1}^{\infty }|u_{n}|^{2}.  \label{E:Parseval-a}
\end{eqnarray}%
Also, since $f\in $ $H^{1}(\Omega ),$ the following bound on $u_{n}$ holds
with some constant $E_{0}.$%
\begin{equation}
\int\limits_{\Omega }|\nabla f|^{2}dx=\sum_{n=1}^{\infty }\lambda
_{n}^{2}|u_{n}|^{2}\equiv E_{0}<||f||_{H^{1}}^{2}.  \label{E:E0}
\end{equation}%
Now, if one defines energy $E(t)$ by the formula
\begin{equation}
E(t)=\int\limits_{\Omega }\left[ \frac{1}{c^{2}(x)}\left\vert \frac{\partial
u}{\partial t}(t,x)\right\vert ^{2}+|\nabla u(t,x)|^{2}\right] dx,
\label{E:energy}
\end{equation}%
this energy will be conserved, i.e.%
\begin{equation}
E(t)=E(0)=E_{0}.  \notag
\end{equation}%
This can be easily verified either by substituting $\eta =\frac{\partial u}{%
\partial t}$ into (\ref{E:week-u}) and integrating from $0$ to $t$ in time,
or by substituting the series representation (\ref{E:Useries}) into (\ref%
{E:week-u}) for both $u$ and $\eta =u,$ and using the orthogonality
relations (\ref{E:ortho1})-(\ref{E:ortho4}).

Let us also find uniform (in $t$) bounds on the solution $u$ and its time
derivative. A bound on $\frac{\partial u}{\partial t}(t,x)$ for any $t,$ in
the weighted $L_{2}$ norm follows from~ (\ref{E:energy}):%
\begin{equation}
\left\Vert \frac{\partial u}{\partial t}\right\Vert _{c^{-2}}^{2}\equiv
\int\limits_{\Omega }\frac{1}{c^{2}(x)}|u_{t}(t,x)|^{2}dx\leq
E_{0}=\int\limits_{\Omega }|\nabla f|^{2}dx\leq ||f||_{H^{1}}^{2},\quad t\in
\lbrack 0,\infty ).  \label{E:bounduprime}
\end{equation}%
In turn, $u(t)$ can be bound by combining (\ref{E:Parseval-a}) and (\ref%
{E:E0}):
\begin{align}
||u||_{c^{-2}}^{2}& \leq \sum_{n=1}^{\infty }|u_{n}|^{2}\leq
|u_{1}|^{2}+\sum_{n=2}^{\infty }|u_{n}|^{2}\leq ||f||_{c^{-2}}^{2}+\frac{1}{%
\lambda _{2}^{2}}\sum_{n=2}^{\infty }\lambda _{n}^{2}|u_{n}|^{2}  \notag \\
& \leq ||f||_{c^{-2}}^{2}+\frac{1}{\lambda _{2}^{2}}\int\limits_{\Omega
}|\nabla f|^{2}dx\leq \left( 1+\frac{1}{\lambda _{2}^{2}}\right)
||f||_{H^{1}}^{2},\qquad t\in \lbrack 0,\infty ).  \label{E:boundu}
\end{align}

\subsection{Convergence of gradual time reversal\label{S:Theory}}

We would like to show that, under certain conditions, the solution $%
v_{\varepsilon}(0,x)$ of the gradual time reversal problem (\ref%
{E:inversesystem}) converges to $f(x)$ as $\varepsilon\rightarrow0.$ We will
represent $v_{\varepsilon}(t,x)$ as a sum of two functions%
\begin{equation}
v_{\varepsilon}(t,x)=u(t,x)\alpha(\varepsilon t)+w_{\varepsilon}(t,x),
\label{E:sumoftwo}
\end{equation}
where $u(t,x)$ is the (unknown) solution of the forward problem. Since%
\begin{equation}
v_{\varepsilon}(0,x)=u(0,x)+w_{\varepsilon}(0,x)=f(x)+w_{\varepsilon}(0,x),
\notag
\end{equation}
function $w_{\varepsilon}(0,x)$ represents the error introduced by gradual
time reversal into the reconstruction; we would like to show that it becomes
small as $\varepsilon\rightarrow0.$

The first term in the right hand side of (\ref{E:sumoftwo}) accounts for the
known Dirichlet boundary values on $\Sigma _{1}$, so that $w_{\varepsilon
}(t,x)$ satisfies zero Dirichlet conditions on $\partial \Omega _{1}$ and
the zero Neumann boundary values on $\Sigma _{2},$ for all values of $t$:%
\begin{equation}
\left\{
\begin{array}{ccc}
w_{\varepsilon }(t,z)=0, & z\in \Sigma _{1}, &  \\
\frac{\partial w_{\varepsilon }(t,z)}{\partial \mathbf{n}}=0, & z\in \Sigma
_{2.} &
\end{array}%
\right. ,\quad t\in \lbrack 0,T]  \label{E:boundary-w}
\end{equation}%
Also, since the derivatives of $\alpha (\varepsilon t)$ vanish at $t=T,$%
\begin{equation}
w_{\varepsilon }(T,x)=0,\quad \frac{\partial w_{\varepsilon }(T,x)}{\partial
t}=0,\quad x\in \Omega .  \label{E:initial-w}
\end{equation}

By substituting (\ref{E:sumoftwo}) into the wave equation~(formula (\ref%
{E:inversesystem}), first line) and taking into account~(\ref%
{E:forward-system}) we obtain%
\begin{align}
\frac{1}{c^{2}(x)}\frac{\partial ^{2}w_{\varepsilon }}{\partial t^{2}}%
(t,x)-\Delta w(t,x)& =\frac{1}{c^{2}(x)}F_{\varepsilon }(t,x),
\label{E:wave-for-w} \\
F_{\varepsilon }(t,x)& \equiv -\varepsilon \left( 2\alpha ^{\prime
}(\varepsilon t)\frac{\partial u}{\partial t}(t,x)+\varepsilon \alpha
^{\prime \prime }(\varepsilon t)u(t,x)\right) .  \label{E:RHS}
\end{align}%
It follows that $w_{\varepsilon }(t,x)$ solves the initial/boundary value
problem for the wave equation~(\ref{E:wave-for-w}), (\ref{E:boundary-w}), (%
\ref{E:initial-w}) with the right hand side given by (\ref{E:RHS}). Since $u,%
\frac{\partial u}{\partial t}\in L^{2}(Q_{T})$, the right hand side $%
F_{\varepsilon }(t,x)$ is also an $L_{2}$ function on $Q_{T},$ and the wave
equation (\ref{E:wave-for-w}) should be understood in the weak sense:%
\begin{equation}
\left( \frac{1}{c^{2}}\frac{\partial ^{2}w_{\varepsilon }}{\partial t^{2}}%
(t,\cdot ),\eta (\cdot )\right) _{L_{2}}+\left( \nabla w_{\varepsilon
}(t,\cdot ),\nabla \eta (\cdot )\right) _{L_{2}}=\left( \frac{1}{c^{2}(x)}%
F_{\varepsilon }(t,\cdot ),\eta (\cdot )\right) _{L_{2}}=\left\langle
F_{\varepsilon },\eta \right\rangle _{c^{-2}}  \notag
\end{equation}%
for all values of $t\in (0,T)$ and for all $\eta (x)\in C^{\infty }(\bar{%
\Omega})$ and vanishing on $\Sigma _{1}.$

\subsubsection{Boundedness of $w_{\protect\varepsilon}$}

Our first step is to show that the error $w_{\varepsilon }(0,x)$ remains
bounded as $\varepsilon \rightarrow 0$ (or, what's the same, as $%
T\rightarrow \infty ).$ It is known \cite{Ladyzh} that the unique solution $%
w_{\varepsilon }(t,x)$ of the initial/boundary value problem of our type
with an $L_{2}$ right hand side can be found in the class $%
H^{1}(0,T;H^{1}(\Omega )),$ and that the first time- and space- derivatives
of $w_{\varepsilon }$ are $L_{2}$ functions on $Q_{T}.$ Moreover, using
separation of variables this solution can be found in the the form of a
generalized Fourier series. To this end one utilizes the eigenfunctions $%
\psi _{k}(x)$, $k=1,2,3,...$ of the weighted Laplacian on $\Omega $ with
mixed boundary conditions, with the corresponding eigenvalues $\nu _{k}^{2},$
$k=1,2,3,...$ :%
\begin{align}
-\Delta \psi _{k}(x)& =\frac{1}{c^{2}(x)}\nu _{k}^{2}\psi _{k}(x),  \notag \\
\left. \psi _{k}(z)\right\vert _{\Sigma _{1}}& =0,\quad \left. \frac{%
\partial \psi _{k}(z)}{\partial \mathbf{n}}\right\vert _{\Sigma _{2}}=0.
\label{E:mixedBC}
\end{align}%
Properties of these eigenfunctions are similar to those of the Neumann
eigenfunctions: they exist in $H^{1}(\Omega )$ and satisfy the orthogonality
conditions:
\begin{equation}
<\psi _{l},\psi _{n}>_{c^{-2}}=0,\text{ if }l\neq n.  \label{E:ortho11}
\end{equation}%
We again assume that $\psi _{n}$'s are normalized with respect to the
weighted $L_{2}$ norm, i.e.%
\begin{equation}
||\psi _{k}||_{c^{-2}}=1,\text{ }k=1,2,3,....,  \label{E:ortho12}
\end{equation}%
and we will use the fact that the gradients of these eigenfunctions are
orthogonal with respect to the inner product (\ref{E:inner-grad}):
\begin{equation}
(\nabla \psi _{l},\nabla \psi _{n})_{L_{2}}=0,\text{ if }l\neq n,\qquad
(\nabla \psi _{l},\nabla \psi _{l})_{L_{2}}=\lambda _{l}^{2},\text{ }%
l=1,2,3,....  \notag
\end{equation}

Now, $w_{\varepsilon}(t,x)$ can be represented in the form of the following
series:%
\begin{align}
w_{\varepsilon}(t,x) & =\sum_{k=1}^{\infty}w_{k}(t)\psi_{k}(x),
\label{E:Dirichlet-ser} \\
w_{k}(t) & =<w_{\varepsilon}(t,x),\psi_{k}>_{c^{-2}},\quad k=1,2,3,...,\quad
t\in\lbrack0,\infty).  \label{E:Dirichlet-coef}
\end{align}
(In the above formula Fourier coefficients $w_{k}(t)$ depend on $%
\varepsilon; $ however, for brevity this dependence is not reflected in our
notation.) Due to orthogonality of $\psi_{k}$ (in the sense of (\ref%
{E:ortho11})) each of the coefficients $w_{k}(t)$ satisfies the differential
equation
\begin{align}
w_{k}^{\prime\prime}(t)+\nu_{k}^{2}w_{k}(t) & =F_{k}(t),\qquad
F_{k}(t)\equiv\left\langle F_{\varepsilon},\psi_{k}\right\rangle _{c^{-2}},
\label{E:diffeq} \\
w_{k}(T) & =0,\qquad w_{k}^{\prime}(T)=0.  \notag
\end{align}
This equation is also understood in the weak sense. \ The causal Green's
functions $\Phi_{k}(t)$ of these equations are equal to $\frac{\sin(\nu_{k}t)%
}{\nu_{k}},$ so that%
\begin{equation}
\nu_{k}w_{k}(0)=\int\limits_{0}^{T}F_{k}(\tau)\sin(\nu_{k}\tau)d\tau,\qquad
w_{k}^{\prime}(0)=\int\limits_{0}^{T}F_{k}(\tau)\cos(\nu_{k}\tau )d\tau.
\label{E:solu1}
\end{equation}
Then, by Cauchy-Schwarz inequality,%
\begin{equation}
|\nu_{k}w_{k}(0)|^{2}\leq T\int\limits_{0}^{T}|F_{k}(\tau)|^{2}d\tau
,\qquad|w_{k}^{\prime}(0)|^{2}\leq
T\int\limits_{0}^{T}|F_{k}(\tau)|^{2}d\tau.  \label{E:Schwartz}
\end{equation}

Due to the boundedness of $\frac{\partial w_{\varepsilon }}{\partial t}$ and
$\nabla w_{\varepsilon }$ in $L_{2}(\Omega ),$ one can define the energy of
the solution $w_{\varepsilon }$ by a formula similar to (\ref{E:energy})%
\begin{equation}
E_{w}(t)=\int\limits_{\Omega }\left[ \frac{1}{c^{2}(x)}\left\vert \frac{%
\partial w_{\varepsilon }}{\partial t}(t,x)\right\vert ^{2}+|\nabla
w_{\varepsilon }(t,x)|^{2}\right] dx.  \notag
\end{equation}%
Using series representation (\ref{E:Dirichlet-ser}) and the orthogonality of
$\psi _{k}$'s:%
\begin{equation}
E_{w}(t)=\sum_{k=1}^{\infty }\left( |w_{k}^{\prime }(t)|^{2}+|\nu
_{k}w_{k}(t)|^{2}\right) .  \notag
\end{equation}%
Let us substitute into this equation bounds on $\nu _{k}w_{k}(0)$ and on $%
w_{k}^{\prime }(0)$ (equation (\ref{E:Schwartz})) and apply Tonelli's
theorem and Parseval's identity:%
\begin{equation}
\frac{1}{2T}E_{w}(0)\leq \sum_{k=1}^{\infty }\int\limits_{0}^{T}|F_{k}(\tau
)|^{2}d\tau =\int\limits_{0}^{T}\sum_{k=1}^{\infty }|F_{k}(\tau )|^{2}d\tau
=\int\limits_{0}^{T}||F_{\varepsilon }(\tau ,\cdot )||_{c^{-2}}^{2}d\tau .
\label{E:energyineq0}
\end{equation}%
Using (\ref{E:RHS}) and recalling that $\varepsilon =1/T$ one obtains
\begin{equation}
||F_{\varepsilon }(t,\cdot )||_{c^{-2}}^{2}\leq \frac{1}{T^{2}}\left(
4|\alpha ^{\prime }(\varepsilon t)|^{2}\left\Vert \frac{\partial u}{\partial
t}(t,x)\right\Vert _{c^{-2}}^{2}+\frac{1}{T^{2}}|\alpha ^{\prime \prime
}(\varepsilon t)|^{2}||u(t,x)||_{c^{-2}}^{2}\right) .  \label{E:energyineq}
\end{equation}%
Let us assume that $T\geq 1,$ and that
\begin{equation}
\max_{s\in \lbrack 0,1]}\alpha ^{\prime }(s)=A_{1},\qquad \max_{s\in \lbrack
0,1]}\alpha ^{\prime \prime }(s)=A_{2}.  \label{E:maximums}
\end{equation}%
Combining (\ref{E:energyineq}), (\ref{E:maximums}) with bounds on $u$ and $%
\frac{\partial u}{\partial t}$ (equations~(\ref{E:boundu}) and (\ref%
{E:bounduprime})) we find a time-independent bound on $||F_{\varepsilon
}(t,\cdot )||_{c^{-2}}^{2}$:%
\begin{align}
||F_{\varepsilon }(t,\cdot )||_{c^{-2}}^{2}& \leq \frac{1}{T^{2}}\left(
4A_{1}^{2}\left\Vert \frac{\partial u}{\partial t}(t,x)\right\Vert
_{c^{-2}}^{2}+A_{2}^{2}||u(t,x)||_{c^{-2}}^{2}\right)  \notag \\
& \leq \frac{1}{T^{2}}\left( 4A_{1}^{2}+\left( 1+\frac{1}{\lambda _{2}^{2}}%
\right) A_{2}^{2}\right) \left\Vert f\right\Vert _{H^{1}}^{2}\leq \frac{%
C_{1}(\Omega ,\alpha )}{T^{2}}\left\Vert f\right\Vert _{H^{1}}^{2}
\label{E:energyineq3}
\end{align}%
with%
\begin{equation}
C_{1}(\Omega ,\alpha )=\left( 4A_{1}^{2}+\left( 1+\frac{1}{\lambda _{2}^{2}}%
\right) A_{2}^{2}\right) .  \notag
\end{equation}%
Finally, by substituting (\ref{E:energyineq3}) into (\ref{E:energyineq0}) we
obtain%
\begin{equation}
E_{w}(0)\leq 2T\left\Vert f\right\Vert _{H^{1}}^{2}\int\limits_{0}^{T}\frac{%
C_{1}(\Omega ,\alpha )}{T^{2}}d\tau =2C_{1}(\Omega ,\alpha )\left\Vert
f\right\Vert _{H^{1}}^{2}.  \notag
\end{equation}%
This allows us to obtain an estimate for $||w_{\varepsilon }(0,\cdot
)||_{H^{1}}^{2}.$ Indeed%
\begin{eqnarray*}
||w_{\varepsilon }||_{H^{1}}^{2} &=&||w_{\varepsilon
}||_{c^{-2}}^{2}+||\nabla w_{\varepsilon }||_{2}^{2}=\sum_{k=1}^{\infty
}\left( |w_{k}|^{2}+|\nu _{k}w_{k}|^{2}\right) \leq \left( \frac{1}{\nu
_{1}^{2}}+1\right) ||\nabla w_{\varepsilon }||_{2}^{2} \\
&\leq &\left( \frac{1}{\nu _{1}^{2}}+1\right) E_{w},
\end{eqnarray*}%
so that%
\begin{equation}
||w_{\varepsilon }(0,\cdot )||_{H^{1}}^{2}\leq \left( \frac{1}{\nu _{1}^{2}}%
+1\right) E_{w}(0)\leq 2C_{1}(\Omega ,\alpha )\left( \frac{1}{\nu _{1}^{2}}%
+1\right) \left\Vert f\right\Vert _{H^{1}}^{2},  \notag
\end{equation}%
which implies the following

\begin{proposition}
Under the assumptions on $\Omega,$ $\Sigma,$ and $f$ made previously, the
error $w_{\varepsilon}(0,\cdot)$ remains bounded in $H^{1}(\Omega)$
independently of $\varepsilon$ (or, equivalently, of $T):$%
\begin{equation}
||w_{\varepsilon}(0,\cdot)||_{H^{1}}\leq\sqrt{2C_{1}(\Omega,\alpha)\left(
\frac{1}{\nu_{1}^{2}}+1\right) }\left\Vert f\right\Vert _{H^{1}}.
\label{E:prop1}
\end{equation}
\end{proposition}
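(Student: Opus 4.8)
The plan is to bound $\|w_\varepsilon(0,\cdot)\|_{H^1}$ by first controlling the energy $E_w(0)$ and then relating the full $H^1$-norm of the single time-slice $w_\varepsilon(0,\cdot)$ back to that energy. Most of the pieces are already assembled in the preceding derivation; the proposition is essentially the endpoint of that chain, so the ``proof'' is really just a matter of stringing the displayed inequalities together and stating the resulting constant explicitly.

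Concretely, I would proceed in four steps. First, expand $w_\varepsilon$ in the mixed-boundary-condition eigenbasis $\{\psi_k\}$, so that the ODE~(\ref{E:diffeq}) for each coefficient $w_k(t)$ has the explicit causal solution~(\ref{E:solu1}); applying Cauchy--Schwarz gives the pointwise bounds~(\ref{E:Schwartz}) on $|\nu_k w_k(0)|^2$ and $|w_k'(0)|^2$. Second, sum these over $k$ and invoke Tonelli's theorem together with Parseval to pass from $\sum_k \int_0^T |F_k(\tau)|^2\,d\tau$ to $\int_0^T \|F_\varepsilon(\tau,\cdot)\|_{c^{-2}}^2\,d\tau$, obtaining the energy inequality~(\ref{E:energyineq0}). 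Third, substitute the definition~(\ref{E:RHS}) of $F_\varepsilon$ and use the uniform-in-$t$ bounds~(\ref{E:boundu}) and (\ref{E:bounduprime}) on $u$ and $\partial u/\partial t$ — this is where the energy conservation of the forward problem is used — to get the time-independent estimate~(\ref{E:energyineq3}), $\|F_\varepsilon(t,\cdot)\|_{c^{-2}}^2 \le T^{-2}\,C_1(\Omega,\alpha)\,\|f\|_{H^1}^2$; plugging this into~(\ref{E:energyineq0}) the two factors of $T$ cancel and one lands on $E_w(0)\le 2C_1(\Omega,\alpha)\|f\|_{H^1}^2$, which is crucially independent of $T$. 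Fourth, observe that for the fixed time slice $t=0$ one has $\|w_\varepsilon(0,\cdot)\|_{H^1}^2 = \sum_k(|w_k(0)|^2 + |\nu_k w_k(0)|^2) \le (1+\nu_1^{-2})\sum_k |\nu_k w_k(0)|^2 \le (1+\nu_1^{-2})E_w(0)$, using $\nu_1>0$; combining this with the energy bound yields~(\ref{E:prop1}).

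The point where one has to be a little careful — and what I would flag as the main obstacle — is the cancellation of the factor $T$ in step three: the Cauchy--Schwarz step~(\ref{E:Schwartz}) introduces a factor of $T$, and only because the forward solution's energy is \emph{conserved} (so that $\|u\|_{c^{-2}}$ and $\|\partial u/\partial t\|_{c^{-2}}$ are bounded uniformly in $t$) does $\|F_\varepsilon(t,\cdot)\|^2_{c^{-2}}$ decay like $1/T^2$, with $\varepsilon=1/T$ supplying one power and the derivative $\alpha'(\varepsilon t)$ supplying, via its product with the small parameter $\varepsilon$, the structure that makes this work. It is essential here that $f\in H^1(\Omega)$: the bound~(\ref{E:bounduprime}) on $\partial u/\partial t$ rests on finiteness of $E_0 = \int_\Omega|\nabla f|^2\,dx$, and without it the whole estimate would fail. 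Everything else — the use of $\nu_1>0$ to dominate $|w_k(0)|^2$ by $|\nu_k w_k(0)|^2$, the application of Tonelli, the bookkeeping of the constant $C_1(\Omega,\alpha)=4A_1^2+(1+\lambda_2^{-2})A_2^2$ — is routine, and I would simply cite the earlier displays rather than rederive them. The conclusion is then immediate.
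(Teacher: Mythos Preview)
Your proposal is correct and follows essentially the same route as the paper: eigenfunction expansion of $w_\varepsilon$, the causal-solution/Cauchy--Schwarz bound~(\ref{E:Schwartz}), Tonelli--Parseval to reach~(\ref{E:energyineq0}), the uniform estimate~(\ref{E:energyineq3}) on $\|F_\varepsilon\|_{c^{-2}}^2$ (which supplies the $T^{-2}$ that cancels the $2T$), and finally the inequality $\|w_\varepsilon\|_{H^1}^2\le(1+\nu_1^{-2})E_w$. Your identification of the $T$-cancellation as the crux, and of $f\in H^1(\Omega)$ as what makes~(\ref{E:bounduprime}) work, matches the paper's argument exactly.
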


\subsubsection{Weak convergence\label{S:Weak}}

In this section we will show that $w_{k}(0)$ converge to 0 as $\varepsilon
\rightarrow 0.$ To this end, let us again consider differential equations~(%
\ref{E:diffeq}) on coefficients $w_{k}(t).$ The right hand sides $F_{k}(t)$
of these equations equal%
\begin{align*}
F_{k}(t)& =\left\langle F_{\varepsilon }(t,\cdot ),\psi _{k}(\cdot
)\right\rangle _{c^{-2}}=-\varepsilon \left\langle \left[ 2\alpha ^{\prime
}(\varepsilon t)\frac{\partial u}{\partial t}+\varepsilon \alpha ^{\prime
\prime }(\varepsilon t)u\right] ,\psi _{k}\right\rangle _{c^{-2}} \\
& =\varepsilon \left\langle \left[ 2\alpha ^{\prime }(\varepsilon
t)\sum_{n=0}^{\infty }u_{n}\varphi _{n}(x)\lambda _{n}\sin \lambda
_{n}t-\varepsilon \alpha ^{\prime \prime }(\varepsilon t)\sum_{n=0}^{\infty
}u_{n}\varphi _{n}(x)\cos \lambda _{n}t\right] ,\psi _{k}\right\rangle
_{c^{-2}} \\
& =\varepsilon \sum_{n=1}^{\infty }u_{n}\left[ 2\alpha ^{\prime
}(\varepsilon t)\lambda _{n}\sin (\lambda _{n}t)-\varepsilon \alpha ^{\prime
\prime }(\varepsilon t)\cos (\lambda _{n}t)\right] <\psi _{k},\varphi
_{n}>_{c^{-2}}.
\end{align*}%
Solutions of these equation given by (\ref{E:solu1}) can be re-written in
the form%
\begin{equation}
\nu _{k}w_{k}(0)=\sum_{n=1}^{\infty }u_{n}I_{n,k}(\varepsilon )<\psi
_{k},\varphi _{n}>_{c^{-2}}  \label{E:inprser}
\end{equation}%
with
\begin{align}
I_{n,k}(\varepsilon )& \equiv \varepsilon \int\limits_{0}^{1/\varepsilon }
\left[ 2\alpha ^{\prime }(\varepsilon t)\lambda _{n}\sin (\lambda
_{n}t)-\varepsilon \alpha ^{\prime \prime }(\varepsilon t)\cos (\lambda
_{n}t)\right] \sin (\nu _{k}t)dt  \notag \\
& =\int\limits_{0}^{1}\left[ 2\alpha ^{\prime }(\tau )\lambda _{n}\sin
(\lambda _{n}\tau /\varepsilon )-\varepsilon \alpha ^{\prime \prime }(\tau
)\cos (\lambda _{n}\tau /\varepsilon )\right] \sin (\nu _{k}\tau
/\varepsilon )d\tau  \notag \\
& =\lambda _{n}\int\limits_{0}^{1}\alpha ^{\prime }(\tau )\left[ \cos \left(
\frac{\lambda _{n}-\nu _{k}}{\varepsilon }\tau \right) -\cos \left( \frac{%
\lambda _{n}+\nu _{k}}{\varepsilon }\tau \right) \right] d\tau  \notag \\
& +\frac{\varepsilon }{2}\int\limits_{0}^{1}\alpha ^{\prime \prime }(\tau )%
\left[ \sin \left( \frac{\lambda _{n}-\nu _{k}}{\varepsilon }\tau \right)
-\sin \left( \frac{\lambda _{n}+\nu _{k}}{\varepsilon }\tau \right) \right]
d\tau .  \label{E:integrals}
\end{align}

Let us find bounds on $I_{n,k}(\varepsilon)$ in the generic case when the
eigenvalues of the Neumann Laplacian, and the Laplacian with the mixed
boundary conditions (\ref{E:mixedBC}) do not coincide, i.e.,
\begin{equation}
\lambda_{n}\neq\nu_{k},\quad\forall n,k.  \label{E:distinct}
\end{equation}

In order to bound the integrals in (\ref{E:integrals}), we extend function $%
\alpha ^{\prime }(\tau )$ evenly to the interval $[-1,1]$ and further to $%
(-\infty ,\infty )$ by zeros. Let us denote this extended function by $%
\alpha _{1}^{\ast }(\tau );$ it is infinitely smooth on $\mathbb{R}_{1}.$
Similarly, we extend \ $\alpha ^{\prime \prime }(\tau )$ in an odd fashion
to the interval $[-1,1]$ \ and further to $(-\infty ,\infty )$ by zeros, and
denote the resulting infinitely smooth function by $\alpha _{2}^{\ast }(\tau
).$ Then the integrals on the last line in (\ref{E:integrals}) are equal (up
to a constant factor) to the values of the Fourier transforms of $\alpha
_{1}^{\ast }(\tau )$ and $\alpha _{2}^{\ast }(\tau )$ at the frequencies $%
(\lambda _{n}-\nu _{k})/\varepsilon $ and $(\lambda _{n}+\nu
_{k})/\varepsilon .$
\begin{align*}
I_{n,k}(\varepsilon )& =\sqrt{2\pi }\lambda _{n}\left( \widehat{\alpha
_{1}^{\ast }}((\lambda _{n}-\nu _{k})/\varepsilon )-\widehat{\alpha
_{1}^{\ast }}((\lambda _{n}+\nu _{k})/\varepsilon )\right) \\
& +\sqrt{2\pi }i\frac{\varepsilon }{2}\left( \widehat{\alpha _{2}^{\ast }}%
((\lambda _{n}-\nu _{k})/\varepsilon )-\widehat{\alpha _{2}^{\ast }}%
((\lambda _{n}+\nu _{k})/\varepsilon )\right)
\end{align*}%
where the Fourier transform $\hat{h}(\xi )$ of function $h(x)$ is defined as%
\begin{equation}
\hat{h}(\xi )=\frac{1}{\sqrt{2\pi }}\int\limits_{-\infty }^{+\infty
}h(x)e^{-ix\xi }dx.  \notag
\end{equation}%
Since both $\alpha _{1}^{\ast }(\tau )$ and $\alpha _{2}^{\ast }(\tau )$ are
finitely supported and infinitely smooth on $\mathbb{R}_{1}$, for any
integer $M$ one can find a constant $B(M)$ such that%
\begin{equation}
\sqrt{2\pi }|\widehat{\alpha _{1}^{\ast }}(\xi )|<\frac{B(M)}{1+|\xi |^{M}},%
\text{ and }\sqrt{2\pi }|\widehat{\alpha _{2}^{\ast }}(\xi )|<\frac{B(M)}{%
1+|\xi |^{M}}.  \notag
\end{equation}%
Now $I_{n,k}(\varepsilon )$ can be bounded by the following expression
(assuming $\varepsilon <2$):%
\begin{align}
|I_{n,k}(\varepsilon )|& <\lambda _{n}\left( \frac{\varepsilon ^{M}B(M)}{%
\varepsilon ^{M}+|\lambda _{n}-\nu _{k}|^{M}}+\frac{\varepsilon ^{M}B(M)}{%
\varepsilon ^{M}+(\lambda _{n}+\nu _{k})^{M}}\right) +\frac{\varepsilon }{2}%
\left( \frac{\varepsilon ^{M}B(M)}{\varepsilon ^{M}+|\lambda _{n}-\nu
_{k}|^{M}}\right. +  \notag \\
& +\left. \frac{\varepsilon ^{M}B(M)}{\varepsilon ^{M}+(\lambda _{n}+\nu
_{k})^{M}}\right) <(\lambda _{n}+1)\frac{2\varepsilon ^{M}B(M)}{|\lambda
_{n}-\nu _{k}|^{M}}.  \label{E:bound}
\end{align}%
\newline
Inequality (\ref{E:bound}) combined with (\ref{E:inprser}) can be used to
find a bound on coefficients$~w_{k}(0)$:%
\begin{align}
|\nu _{k}w_{k}(0)|& \leq \sum_{n=1}^{\infty }|u_{n}|\,|(\psi _{k},\varphi
_{n})_{c^{-2}}|\,|I_{n,k}(\varepsilon )|\leq \sum_{n=1}^{\infty
}|u_{n}|\,(\lambda _{n}+1)\frac{2\varepsilon ^{M}B(M)}{|\lambda _{n}-\nu
_{k}|^{M}}\,  \label{E:good-estimate} \\
& \leq 2\varepsilon ^{M}B(M)\left( \sum_{n=1}^{\infty }|u_{n}|\frac{1}{%
|\lambda _{n}-\nu _{k}|^{M}}+\sum_{n=1}^{\infty }|\lambda _{n}u_{n}|\frac{1}{%
|\lambda _{n}-\nu _{k}|^{M}}\right)  \notag \\
& \leq 2\varepsilon ^{M}B(M)\left( ||f||_{c^{-2}}+||\nabla f||_{2}\right)
\sum_{n=1}^{\infty }\frac{1}{|\lambda _{n}-\nu _{k}|^{M}}  \notag \\
& =2\varepsilon ^{M}B(M)||f||_{H^{1}}\sum_{n=1}^{\infty }\frac{1}{|\lambda
_{n}-\nu _{k}|^{M}},  \label{E:nice-estimate}
\end{align}%
where we took into account that $|<\psi _{k},\varphi _{n}>_{c^{-2}}|$ cannot
exceed $1,|u_{n}|$ cannot exceed the weighted $L_{2}$ norm of $f(x),$ and $%
|u_{n}\lambda _{n}|$ is less or equal than the $L_{2}$ norm of $|\nabla f|.$

It is well known (see for example, \cite{Fleck} and references therein) that
the eigenvalues $\lambda_{n}$ grow without a bound as $n\rightarrow\infty;$
the asymptotic rate of growth is%
\begin{equation}
\lambda_{n}\sim C_{2}(\Omega,c(x))n^{\frac{2}{d}},  \notag
\end{equation}
where $C_{2}(\Omega,c(x))$ is a domain-dependent positive constant, and $d$
is the dimensionality of the space. This implies that for sufficiently large
values of $M$ (e.g., $M\geq d)$ the series in (\ref{E:nice-estimate})
converges. This, in turn, yields a convergence result for each $|w_{k}(0)|:$%
\begin{equation}
|w_{k}(0)|\leq C_{3}(M,k)||f||_{H^{1}}\varepsilon^{M}\underset{\varepsilon
\rightarrow0}{\rightarrow}0,  \notag
\end{equation}
with%
\begin{equation}
C_{3}(M,k)=2B(M)\frac{1}{\nu_{k}}\sum_{n=1}^{\infty}\frac{1}{%
|\lambda_{n}-\nu_{k}|^{M}}.  \notag
\end{equation}

On the other hand,%
\begin{equation}
\lbrack
w_{\varepsilon},\psi_{k}]_{H^{1}}=<w_{\varepsilon},\psi_{k}>_{c^{-2}}+(%
\nabla w_{\varepsilon},\nabla\psi_{k})_{L_{2}}=(1+\nu_{k}^{2})w_{k}(t).
\notag
\end{equation}
Therefore
\begin{equation}
|[w_{\varepsilon},\psi_{k}]_{H^{1}}|\leq
C_{4}(M,k)||f||_{H^{1}}\varepsilon^{M}\underset{\varepsilon\rightarrow0}{%
\rightarrow}0.  \label{E:converg}
\end{equation}
with
\begin{equation}
C_{4}(M,k)=2B(M)\frac{(1+\nu_{k}^{2})}{\nu_{k}}\sum_{n=1}^{\infty}\frac {1}{%
|\lambda_{n}-\nu_{k}|^{M}}.  \label{E:C4}
\end{equation}
We have thus proven the following

\begin{theorem}
Under the assumptions on $\Omega,$ $\Sigma,$ and $f$ made previously, and
under the condition (\ref{E:distinct}), the result $v_{\varepsilon}(0,\cdot)$
of gradual time reversal converges to $f$ weakly in $H^{1}(\Omega)$ as $%
\varepsilon\rightarrow0$ (or, equivalently, as $T\rightarrow\infty).$
\end{theorem}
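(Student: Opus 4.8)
The plan is to deduce weak convergence in $H^1(\Omega)$ from the pointwise convergence of the generalized Fourier coefficients $[w_\varepsilon(0,\cdot),\psi_k]_{H^1}$ to $0$, combined with the uniform $H^1$ bound on $w_\varepsilon(0,\cdot)$ established in the Proposition. Recall that since $v_\varepsilon(0,x)=f(x)+w_\varepsilon(0,x)$, it suffices to show $w_\varepsilon(0,\cdot)\rightharpoonup 0$ weakly in $H^1(\Omega)$. First I would observe that the eigenfunctions $\psi_k$ of the mixed boundary value problem~(\ref{E:mixedBC}) form an orthogonal basis of $H^1(\Omega)$ (more precisely, of the closed subspace of $H^1(\Omega)$ consisting of functions vanishing on $\Sigma_1$ in the trace sense, which is where all the $w_\varepsilon(0,\cdot)$ live by~(\ref{E:boundary-w})); indeed $[\psi_l,\psi_k]_{H^1}=(1+\nu_k^2)\delta_{lk}$. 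Hence the finite linear combinations of the $\psi_k$ are dense in that subspace.

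Next I would invoke the standard criterion for weak convergence in a Hilbert space: a sequence $g_\varepsilon$ converges weakly to $0$ if and only if (i) $\|g_\varepsilon\|$ is bounded, and (ii) $[g_\varepsilon,h]\to 0$ for all $h$ in a dense subset. Condition (i) is exactly the content of the Proposition, equation~(\ref{E:prop1}), which gives the $\varepsilon$-independent bound $\|w_\varepsilon(0,\cdot)\|_{H^1}\le\sqrt{2C_1(\Omega,\alpha)(1/\nu_1^2+1)}\,\|f\|_{H^1}$. For condition (ii), I would take $h=\psi_k$ for each fixed $k$: estimate~(\ref{E:converg}) gives $|[w_\varepsilon(0,\cdot),\psi_k]_{H^1}|\le C_4(M,k)\|f\|_{H^1}\varepsilon^M\to 0$ as $\varepsilon\to 0$, for each fixed $k$. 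By linearity this extends to any finite linear combination of the $\psi_k$, which is the required dense set. Combining (i) and (ii) via the standard $3\varepsilon$-argument (approximate an arbitrary $h$ by a finite combination $h_N$, bound the error using the uniform bound, and send $\varepsilon\to 0$ with $N$ fixed) yields $[w_\varepsilon(0,\cdot),h]_{H^1}\to 0$ for every $h$ in the subspace, hence $w_\varepsilon(0,\cdot)\rightharpoonup 0$, and therefore $v_\varepsilon(0,\cdot)\rightharpoonup f$ in $H^1(\Omega)$.

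The main obstacle — really the only non-bookkeeping point — is justifying that the $\psi_k$ actually span a dense subspace of the relevant closed subspace of $H^1(\Omega)$, so that testing against them suffices. This is the usual spectral-theory fact for the weighted mixed Laplacian on a piecewise-smooth bounded domain (the operator has compact resolvent, so its eigenfunctions form a complete orthogonal system in both $L_2(\Omega,c^{-2})$ and, after the normalization~(\ref{E:ortho12}), in the energy space), but it should be stated carefully here since $\Sigma_1$ may be a proper subset of $\Sigma$. I would cite the same reference \cite{Ladyzh} used earlier for well-posedness. Everything else is a direct assembly of the already-established inequalities~(\ref{E:prop1}) and~(\ref{E:converg}).
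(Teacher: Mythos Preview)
Your proposal is correct and follows essentially the same approach as the paper: the paper's own proof consists of the single observation that $w_\varepsilon(0,\cdot)$ is bounded in $H^1(\Omega)$ by Proposition~1 and satisfies $[w_\varepsilon(0,\cdot),\psi_k]_{H^1}\to 0$ for every $k$ by~(\ref{E:converg}), which is exactly the pair of ingredients you assemble. You have simply made explicit the density/basis argument and the passage from the closed subspace $\{h\in H^1(\Omega):h|_{\Sigma_1}=0\}$ to all of $H^1(\Omega)$, points the paper leaves to the reader.
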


\begin{proof}
The error $w_{\varepsilon}(0,x)=v_{\varepsilon}(0,x)-f(x)$ remains bounded
in $H^{1}(\Omega)$ (see (\ref{E:prop1})), and it satisfies (\ref{E:converg})
for all $\psi_{k}.$
\end{proof}

\begin{remark}
In general, decay of the coefficients $w_{k}(0)$ as $\varepsilon\rightarrow0$
is not uniform in $k.$ The factor $\frac{(1+\nu_{k}^{2})}{\nu_{k}}$ in (\ref%
{E:C4}) is growing as $k\rightarrow\infty,$ and there is no reason to expect
the sum of the series in (\ref{E:C4}) to decrease in $k$ in the general case.
\end{remark}

\subsubsection{The case of coinciding eigenvalues.}

Weak convergence was proven in the previous section by showing that all the
coefficients $I_{n,k}(\varepsilon )$ in (\ref{E:inprser}) converge to 0 as $%
\varepsilon \rightarrow 0,$in the case when the eigenvalues $\lambda _{n} $
\ and $\nu _{k}$ do not coincide (see equation~(\ref{E:distinct})). Let us
now analyze the behavior of the error $w_{\varepsilon }(0,x)$ if, for one
pair of numbers $n_{0}$ and~$k_{0}$, the eigenvalues do coincide (i.e. $%
\lambda _{n_{0}}=\nu _{k_{0}}$) but all the other pairs are still distinct.
Formulas (\ref{E:bound}) and (\ref{E:nice-estimate}) remain valid for all $%
k\neq k_{0}.$ For $k=k_{0}$ and $n=n_{0}$ equation~(\ref{E:integrals})
simplifies to%
\begin{align*}
I_{n_{0},k_{0}}(\varepsilon )& =\nu _{k_{0}}\int\limits_{0}^{1}\alpha
^{\prime }(\tau )\left[ 1-\cos \left( \frac{2\nu _{k_{0}}}{\varepsilon }\tau
\right) \right] d\tau -\frac{\varepsilon }{2}\int\limits_{0}^{1}\alpha
^{\prime \prime }(\tau )\sin \left( \frac{2\nu _{k_{0}}}{\varepsilon }\tau
\right) d\tau \\
& =\nu _{k_{0}}\int\limits_{0}^{1}\alpha ^{\prime }(\tau )d\tau =-\nu
_{k_{0}}.
\end{align*}%
Equation~(\ref{E:inprser}) for $k=k_{0}$ now becomes%
\begin{equation}
w_{k_{0}}(0)=\left( \frac{1}{\nu _{k_{0}}}\sum_{\substack{ n=1,  \\ n\neq
n_{0}}}^{\infty }u_{n}I_{n,k_{0}}(\varepsilon )<\psi _{k_{0}},\varphi
_{n}>_{c^{-2}}\right) -u_{n_{0}}<\psi _{k_{0}},\varphi _{n_{0}}>_{c^{-2}}.
\notag
\end{equation}%
It follows that, in addition to the error term converging to $0$ as $%
\varepsilon \rightarrow 0$ (shown in parentheses in the above formula), the
total error $w_{\varepsilon }(0,x)$ will contain an additional term equal to%
\begin{equation}
-u_{n_{0}}<\psi _{k_{0}},\varphi _{n_{0}}>_{c^{-2}}\psi _{k_{0}}(x).  \notag
\end{equation}%
Unless $<\psi _{k_{0}},\varphi _{n_{0}}>_{c^{-2}}$ happens to equal $0$
(simple examples show that this may or may not happen), the reconstruction
will have an error term that does not depend on $\varepsilon $ (or $T$), and
thus the gradual time reversal algorithm will not converge to $f.$

Clearly, if there are several pairs of eigenvalues $\left( \lambda
_{n_{j}},\nu _{k_{j}}\right) $, $j=1,..,J,$ ,\thinspace $J\leq \infty ,$
such that $\lambda _{n_{j}}=\nu _{k_{j}}$ and $<\psi _{k_{j}},\varphi
_{n_{j}}>_{c^{-2}}\neq 0,$ the reconstruction will contain a non-decaying
(with $\varepsilon $) error given by the following expression%
\begin{equation}
-\sum_{j=1}^{J}u_{n_{j}}<\psi _{k_{j}},\varphi _{n_{j}}>_{c^{-2}}\psi
_{k_{j}}(x).  \label{E:errorexpression}
\end{equation}%
The number $J$ of error terms in the above sum can happen to be infinite. In
this case (\ref{E:errorexpression}) is a converging series in $H^{1}(\Omega
),$ since in this space the error is bounded per Proposition 1.

\section{Particular cases\label{S:particular}}

Stronger convergence results can be obtained for simple domains where
eigenvalues of the Laplacians with proper boundary conditions are known.
Below we show that in the case of a circular cavity (in 2D) gradual time
reversal converges strongly in $H^{1}(\Omega )$ if $f(x)$ also belongs to
the latter space. We also analyze the case of a rectangular resonant cavity
with full and partial data (i.e. data measured on only one side of the
rectangle) and obtain somewhat unexpected results on weak convergence in
such cavities.

\subsection{Circular cavity}

Let us consider a particular case where the domain $\Omega $ is the unit
disk in $\mathbb{R}^{2}$ centered at the origin, with the boundary $\Sigma =%
\mathbb{S}^{1}.$ The data $U(t,z)$ are measured on all of $\Sigma $ (i.e., $%
\Sigma _{1}=\Sigma ),$ and the speed of sound is constant. Without loss of
generality we will (as we may) assume that $c(x)=1.$ Now the weighted
product $<\cdot ,\cdot >_{c^{-2}}$ and the norm $||\cdot ||_{c^{-2}}$
coincide with their standard counterparts $(\cdot ,\cdot )_{L_{2}}$ and $%
||\cdot ||_{2}.$Under the assumptions we made the error $w_{\varepsilon
}(t,\cdot )$ satisfies the zero Dirichlet boundary conditions on $\Sigma .$
The eigenfunctions $\psi _{k}$ and $\varphi _{n}$ are those of the Dirichlet
and Neumann Laplacians on the unit disk, respectively. They are normalized
in $L_{2}$, and in polar coordinates $(r,\theta )$ can be expressed using
double index notation as
\begin{align*}
\psi _{k,m}(r,\theta )& =D_{k,m}J_{|m|}(\nu _{k,|m|}r)e^{im\theta },\quad
k\in \mathbb{N},\quad m\in \mathbb{Z}, \\
\varphi _{n,l}(r,\theta )& =N_{n,l}J_{|l|}(\nu _{n,|l|}r)e^{il\theta },\quad
n\in \mathbb{N},\quad l\in \mathbb{Z},
\end{align*}%
where the eigenvalues $\nu _{k,|m|}$ and $\nu _{n,|l|}$ coincide with the
zeros $j_{k,|m|}$ and $j_{n,|l|}^{\prime }$ of the Bessel functions and
their derivatives%
\begin{align*}
\nu _{k,|m|}& =j_{k,|m|},\quad J_{|m|}(j_{k,|m|})=0,\quad k\in \mathbb{N}%
,\quad m\in \mathbb{Z}, \\
\lambda _{n,|l|}& =j_{n,|l|}^{\prime },\quad
J^\prime_{|l|}(j_{n,|l|})=0,\quad n\in \mathbb{N},\quad l\in \mathbb{Z},
\end{align*}%
and where the normalization constants $D_{k,m}$ and $N_{n,l}$ equal
\begin{align*}
D_{k,m}& =\left( 2\pi \int\limits_{0}^{1}J_{|m|}^{2}(\nu
_{k,|m|}r)rdr\right) ^{-\frac{1}{2}}=\frac{1}{\sqrt{\pi }|J_{m}^{\prime
}(\nu _{k,|m|})|}, \quad m\in \mathbb{Z},\quad k\in \mathbb{N}, \\
N_{n,l}& =\left( 2\pi \int\limits_{0}^{1}J_{|l|}^{2}(\lambda
_{n,|l|}r)rdr\right) ^{-\frac{1}{2}}=\left( \pi \left[ 1-\frac{l^{2}}{%
\lambda _{n,|l|}^{2}}\right] J_{|l|}^{2}(\lambda _{n,|l|})\right) ^{-\frac{1%
}{2}},
\end{align*}%
for all $l\in \mathbb{Z},$ $n\in \mathbb{N},$ except the case $(n,l)=(1,0)$,
when $N_{0,1}=1/\sqrt{\pi }.$

Now, the forward problem has a solution in the form%
\begin{equation}
u(t,r,\theta)=\sum_{l=-\infty}^{\infty}\sum_{n=1}^{\infty}u_{n,l}\varphi
_{n,l}(r,\theta)e^{il\theta}\cos(\lambda_{n,|l|}t),  \label{E:diskforward}
\end{equation}
where Fourier coefficients $u_{n,l}$ are related to the initial condition $%
u(0,r,\theta)=f(r,\theta)$ by%
\begin{equation}
u_{n,l}=f_{n,l,}\qquad f_{n,l}\equiv(f,\varphi_{n,l})_{L_{2}}.
\label{E:diskcoefs}
\end{equation}
Since $u(0,r,\theta)=f(r,\theta)\in H^{1}(\Omega),$%
\begin{equation}
||u(0,r,\theta)||_{H^{1}}^{2}=||f||_{H^{1}}^{2}=\sum_{l=-\infty}^{\infty}%
\sum_{n=1}^{\infty}|u_{n,l}|^{2}\left( 1+\lambda_{n,|l|}^{2}\right) <\infty.
\label{E:H1-disk}
\end{equation}

As before, $v_{\varepsilon }(t,r,\theta )$ represents solution of the
gradual time reversal problem, and $w_{\varepsilon }(0,r,\theta
)=v_{\varepsilon }(0,r,\theta )-f(r,\theta )$ represents the error of
approximating $f$ by $v_{\varepsilon }(0,r,\theta ).$ We expand $%
w_{\varepsilon }(0,r,\theta )$ in the series of $\psi _{k,m}(r,\theta )$%
\begin{align*}
w_{\varepsilon }(t,r,\theta )& =\sum_{m=-\infty }^{\infty
}\sum_{k=1}^{\infty }w_{m,k}(t)\psi _{k,m}(r,\theta ), \\
w_{m,k}(t)& =(w_{\varepsilon }(t,\cdot ,\cdot ),\psi _{k,m})_{L_{2}},
\end{align*}%
and apply the theoretical considerations of Section \ref{S:Theory}. Equation
(\ref{E:diffeq}) in the notation of the present section takes the form%
\begin{align}
w_{k,m}^{\prime \prime }(t)+\nu _{k,|m|}^{2}w_{k,m}(t)& =F_{k,m}(t),\qquad
F_{k,m}(t)\equiv (F_{\varepsilon },\psi _{k,m})_{L_{2}},
\label{E:newinverse} \\
w_{k,m}(T)& =0,\qquad w_{k,m}^{\prime }(T)=0,  \notag
\end{align}%
where
\begin{equation}
F_{\varepsilon }(t,r,\theta )\equiv -\varepsilon \left( 2\alpha ^{\prime
}(\varepsilon t)\frac{\partial u}{\partial t}(t,r,\theta )+\varepsilon
\alpha ^{\prime \prime }(\varepsilon t)u(t,r,\theta )\right) .  \notag
\end{equation}%
Differential equations~(\ref{E:newinverse}) are solved the same way as
before. Taking into account the orthogonality of the eigenfunctions $\psi
_{k,m}$ and $\varphi _{n,l}$ with \ $m\neq l,$ we thus obtain%
\begin{align}
\nu _{k,m}w_{k,m}(0)& =\sum_{n=1}^{\infty }u_{n,m}I_{n,k,m}(\varepsilon
)(\psi _{k,m},\varphi _{n,m})_{L_{2}},  \label{E:diskseries} \\
I_{n,k,m}(\varepsilon )& \equiv \varepsilon \int\limits_{0}^{1/\varepsilon }
\left[ 2\alpha ^{\prime }(\varepsilon t)\lambda _{n,|m|}\sin (\lambda
_{n,|m|}t)-\varepsilon \alpha ^{\prime \prime }(\varepsilon t)\cos (\lambda
_{n,|m|}t)\right] \sin (\nu _{k,|m|}t)dt,  \notag
\end{align}%
for $k,n\in \mathbb{N},$ $m\in \mathbb{Z}.$ Since for each fixed $m$ the
eigenvalues $\lambda _{n,|m|}$ and $\nu _{k,|m|}$ do not coincide,
conclusions of Section \ref{S:Weak} apply. Namely, from (\ref%
{E:good-estimate}) we obtain%
\begin{equation}
|\nu _{k,m}w_{k,m}(0)|\leq 2\varepsilon ^{M}B(M)\sum_{n=1}^{\infty
}|u_{n,m}|\,(\lambda _{n,|m|}+1)\frac{1}{|\lambda _{n,|m|}-\nu _{k,|m|}|^{M}}%
,\qquad m\in \mathbb{Z},  \label{E:diskbound1}
\end{equation}%
where the series converges for any $M\geq 3.$

It is well known that the zeros of the Bessel functions and their
derivatives becomes asymptotically equispaced for large $n$ and $k,$ and the
difference between the closest $\lambda_{n,m}$ and $\nu_{k,m}$ becomes close
to $\pi/2.$ One can prove a stronger statement (see Appendix): there exists
a constant $C_{5}$ such that the distance between $\lambda_{n,m}$ and $%
\nu_{k,m}$ is bounded uniformly in $m$ from below, namely
\begin{equation}
|\lambda_{n,m}-\nu_{k,m}|\geq C_{5}|2n-2k+1|,\qquad\forall m\in\mathbb{Z}.
\label{E:eigenbounds}
\end{equation}
Now (\ref{E:diskbound1}) can be re-written as%
\begin{equation}
|\nu_{k,m}w_{k,m}(0)|\leq\frac{2\varepsilon^{M}B(M)}{C_{5}^{M}}\sum
_{n=1}^{\infty}\frac{|u_{n,m}|\,(\lambda_{n,|m|}+1)}{|2n-2k+1|^{M}},\qquad
m\in\mathbb{Z}.  \label{E:simplerbound}
\end{equation}
Let us estimate the factor $|u_{n,m}|\,(\lambda_{n,|m|}+1)$ in the above
formula as follows%
\begin{equation}
|u_{n,m}|\,(\lambda_{n,|m|}+1)\leq\sqrt{\sum_{n=1}^{\infty}|u_{n,m}|^{2}\,(%
\lambda_{n,|m|}+1)^{2}}\leq\sqrt{2\sum_{n=1}^{\infty}|u_{n,m}|^{2}\,(%
\lambda_{n,|m|}^{2}+1)},  \notag
\end{equation}
substitute it in (\ref{E:simplerbound}) and take the root outside of the
summation sign thus obtaining%
\begin{equation}
\sum_{k=1}^{\infty}|\nu_{k,m}w_{k,m}(0)|^{2}\leq\left( \frac{4\varepsilon
^{M}B(M)}{C_{5}^{M}}\sum_{n=1}^{\infty}|u_{n,m}|^{2}\,(%
\lambda_{n,|m|}^{2}+1)\right) \sum_{n=1}^{\infty}\frac{1}{|2n-2k+1|^{M}}.
\label{E:evensimplerbound}
\end{equation}
The last series in the above equation is convergent and can be uniformly
bounded:%
\begin{equation}
\sum_{n=1}^{\infty}\frac{1}{|2n-2k+1|^{M}}\leq\sum_{n=-\infty}^{\infty}\frac{%
1}{|2n-2k+1|^{M}}=\sum_{l=-\infty}^{\infty}\frac{1}{|2l+1|^{M}}\equiv C_{6}.
\notag
\end{equation}
This allows us to simplify (\ref{E:evensimplerbound}) further:%
\begin{align}
\sum_{k=1}^{\infty}|\nu_{k,m}w_{k,m}(0)|^{2} & \leq C_{7}(M)\varepsilon
^{M}\sum_{n=1}^{\infty}|u_{n,m}|^{2}\,(\lambda_{n,|m|}^{2}+1),
\label{E:newestimate} \\
\text{where }C_{7}(M) & \equiv\frac{4B(M)}{C_{5}^{M}}\sum_{l=-\infty
}^{\infty}\frac{1}{|2l+1|^{M}}.  \notag
\end{align}
Finally, we can find a bound on $||w_{\varepsilon}(0,r,\theta)||_{H^{1}.}$.
First, we notice that%
\begin{equation}
||w_{\varepsilon}||_{H^{1}}^{2}=||w_{\varepsilon}||_{2}^{2}+||\nabla
w_{\varepsilon}||_{2}^{2}=\sum_{\substack{ m\in\mathbb{Z}  \\ k\in\mathbb{N}
}}\left( |w_{k,m}|^{2}+|\nu_{k,m}w_{k,m}|^{2}\right) \leq\left( \frac {1}{%
\nu_{1,0}^{2}}+1\right) \sum_{\substack{ m\in\mathbb{Z}  \\ k\in\mathbb{N}}}%
|\nu_{k,m}w_{k,m}|^{2},  \notag
\end{equation}
for any $t\in\lbrack0,\infty).$ By combining this inequality\ (taken at $%
t=0) $ with (\ref{E:newestimate}) and taking into account (\ref{E:H1-disk})
we obtain%
\begin{align*}
||w_{\varepsilon}(0,r,\theta)||_{H^{1}}^{2} & \leq\left( \frac{1}{\nu
_{1,0}^{2}}+1\right) \sum_{m\in\mathbb{Z}}\sum_{k\in\mathbb{N}}|\nu
_{k,m}w_{k,m}(0)|^{2} \\
& =C_{7}(M)\varepsilon^{M}\left( \frac{1}{\nu_{1,0}^{2}}+1\right) \sum_{m\in%
\mathbb{Z}}\sum_{n\in\mathbb{N}}|u_{n,m}|^{2}\,\left(
\lambda_{n,|m|}^{2}+1\right) , \\
& =C_{7}(M)\varepsilon^{M}\left( \frac{1}{\nu_{1,0}^{2}}+1\right)
||f||_{H^{1}}^{2}\underset{\varepsilon\rightarrow0}{\rightarrow}0.
\end{align*}
Thus, we have proven the following

\begin{theorem}
If $\Omega$ is the unit disk in $\mathbb{R}^{2},$ the data are measured on
the whole circle (i.e., $\Sigma_{1}=\mathbb{S}^{1}),$ the cut-off function $%
\alpha(t)$ satisfies the assumptions made in Section \ref{S:Theory}, and the
initial pressure $f\in H^{1}(\Omega),$ then the result $v_{\varepsilon
}(0,\cdot)$ of gradual time reversal converges to $f$ in $H^{1}(\Omega)$
(strongly), as $\varepsilon\rightarrow0$ (or as $T\rightarrow\infty).$
\end{theorem}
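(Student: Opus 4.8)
The plan is to specialize the general construction of Section~\ref{S:Theory} to the disk, using the explicit Bessel eigenfunctions, and then to upgrade the coefficientwise (weak) decay established there to a genuine $H^{1}$ estimate by exploiting a \emph{uniform-in-$m$} lower bound on the spacing of the eigenvalues. First I would write $v_{\varepsilon}(t,r,\theta)=u(t,r,\theta)\alpha(\varepsilon t)+w_{\varepsilon}(t,r,\theta)$, so that the reconstruction error is $w_{\varepsilon}(0,\cdot)=v_{\varepsilon}(0,\cdot)-f$, and expand $w_{\varepsilon}$ in the $L_{2}$-orthonormal Dirichlet eigenfunctions $\psi_{k,m}$ of the unit disk. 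Because the angular index $m$ is preserved by the Laplacian, only the forward eigenfunctions $\varphi_{n,m}$ with the \emph{same} index $m$ contribute, and each Fourier coefficient $w_{k,m}(t)$ solves the scalar ODE~(\ref{E:newinverse}); solving it with the causal Green's function $\sin(\nu_{k,|m|}t)/\nu_{k,|m|}$ and inserting the series~(\ref{E:diskforward}) for $u$ yields $\nu_{k,m}w_{k,m}(0)$ as the sum~(\ref{E:diskseries}) over $n$.

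The next step is to reuse, essentially verbatim, the bound on the oscillatory integrals $I_{n,k,m}(\varepsilon)$ obtained in Section~\ref{S:Weak} by extending $\alpha'$ and $\alpha''$ to compactly supported smooth functions on $\mathbb{R}$ and invoking the rapid decay of their Fourier transforms: for every integer $M$ one has $|I_{n,k,m}(\varepsilon)|\leq 2(\lambda_{n,|m|}+1)\varepsilon^{M}B(M)/|\lambda_{n,|m|}-\nu_{k,|m|}|^{M}$, together with $|(\psi_{k,m},\varphi_{n,m})_{L_{2}}|\leq 1$. This already yields~(\ref{E:diskbound1}). The genuinely new ingredient, and the one that turns weak convergence into strong convergence, is the Appendix estimate~(\ref{E:eigenbounds}), namely $|\lambda_{n,m}-\nu_{k,m}|\geq C_{5}|2n-2k+1|$ with $C_{5}$ \emph{independent of} $m$; substituting it into~(\ref{E:diskbound1}) produces~(\ref{E:simplerbound}), a bound on $|\nu_{k,m}w_{k,m}(0)|$ whose denominator no longer sees the order $|m|$.

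What remains is summation bookkeeping. In~(\ref{E:simplerbound}) I would bound each coefficient $|u_{n,m}|(\lambda_{n,|m|}+1)$ by the $n$-independent quantity $\sqrt{2}\bigl(\sum_{n}|u_{n,m}|^{2}(\lambda_{n,|m|}^{2}+1)\bigr)^{1/2}$, pull it outside the sum over $n$, square, then sum over $k\in\mathbb{N}$ using the $k$-independent estimate $\sum_{n}|2n-2k+1|^{-M}\leq\sum_{l\in\mathbb{Z}}|2l+1|^{-M}=C_{6}$, and finally sum over $m\in\mathbb{Z}$. The double series that is left over is precisely $\sum_{m}\sum_{n}|u_{n,m}|^{2}(\lambda_{n,|m|}^{2}+1)=\|f\|_{H^{1}}^{2}$, which is finite exactly because $f\in H^{1}(\Omega)$ (equation~(\ref{E:H1-disk})); this is the only point at which the $H^{1}$ hypothesis on $f$ enters in an essential way. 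Combining with the spectral-gap (Poincar\'e-type) inequality $\|w_{\varepsilon}\|_{H^{1}}^{2}\leq(1+\nu_{1,0}^{-2})\sum_{k,m}|\nu_{k,m}w_{k,m}|^{2}$ then gives $\|w_{\varepsilon}(0,\cdot)\|_{H^{1}}^{2}\leq C_{7}(M)(1+\nu_{1,0}^{-2})\varepsilon^{M}\|f\|_{H^{1}}^{2}$, which tends to $0$ as $\varepsilon\to 0$ for any $M\geq 3$; this is the asserted strong $H^{1}$ convergence, and it even comes with an explicit algebraic rate in $\varepsilon$.

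The one step I expect to be the real obstacle is the uniform interlacing estimate~(\ref{E:eigenbounds}) relating the zeros $j_{k,|m|}$ of $J_{|m|}$ to the zeros $j_{n,|m|}'$ of $J_{|m|}'$. The classical fact that these zeros become asymptotically equispaced, with consecutive gaps tending to $\pi/2$, is only asymptotic in $n$ and $k$ for a \emph{fixed} order, whereas the argument above collapses unless $C_{5}$ stays bounded away from zero as $|m|\to\infty$. Controlling $J_{\mu}$ and $J_{\mu}'$ simultaneously and uniformly in the order $\mu$, so as to exclude a near-coincidence between a zero of $J_{\mu}$ and a zero of $J_{\mu}'$ that would shrink with $\mu$, is exactly what the auxiliary theorem in the Appendix is for; everything else in the proof is routine once that estimate is available.
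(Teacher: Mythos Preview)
Your proposal is correct and follows the paper's own proof essentially step for step: the decomposition $v_{\varepsilon}=u\alpha+w_{\varepsilon}$, the expansion in Dirichlet eigenfunctions with the angular decoupling, the bound~(\ref{E:diskbound1}) on the oscillatory integrals, the crucial uniform spacing estimate~(\ref{E:eigenbounds}) from the Appendix, the pull-out-and-square summation argument, and the final Poincar\'e-type control of $\|w_{\varepsilon}\|_{H^{1}}$ by $\sum_{k,m}|\nu_{k,m}w_{k,m}|^{2}$ are all exactly what the paper does. Your identification of~(\ref{E:eigenbounds}) as the one nontrivial ingredient is also on the mark.
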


\begin{figure}[t]
\begin{center}
\subfigure[Phantom]{
\includegraphics[width=1.75in,height=1.75in]{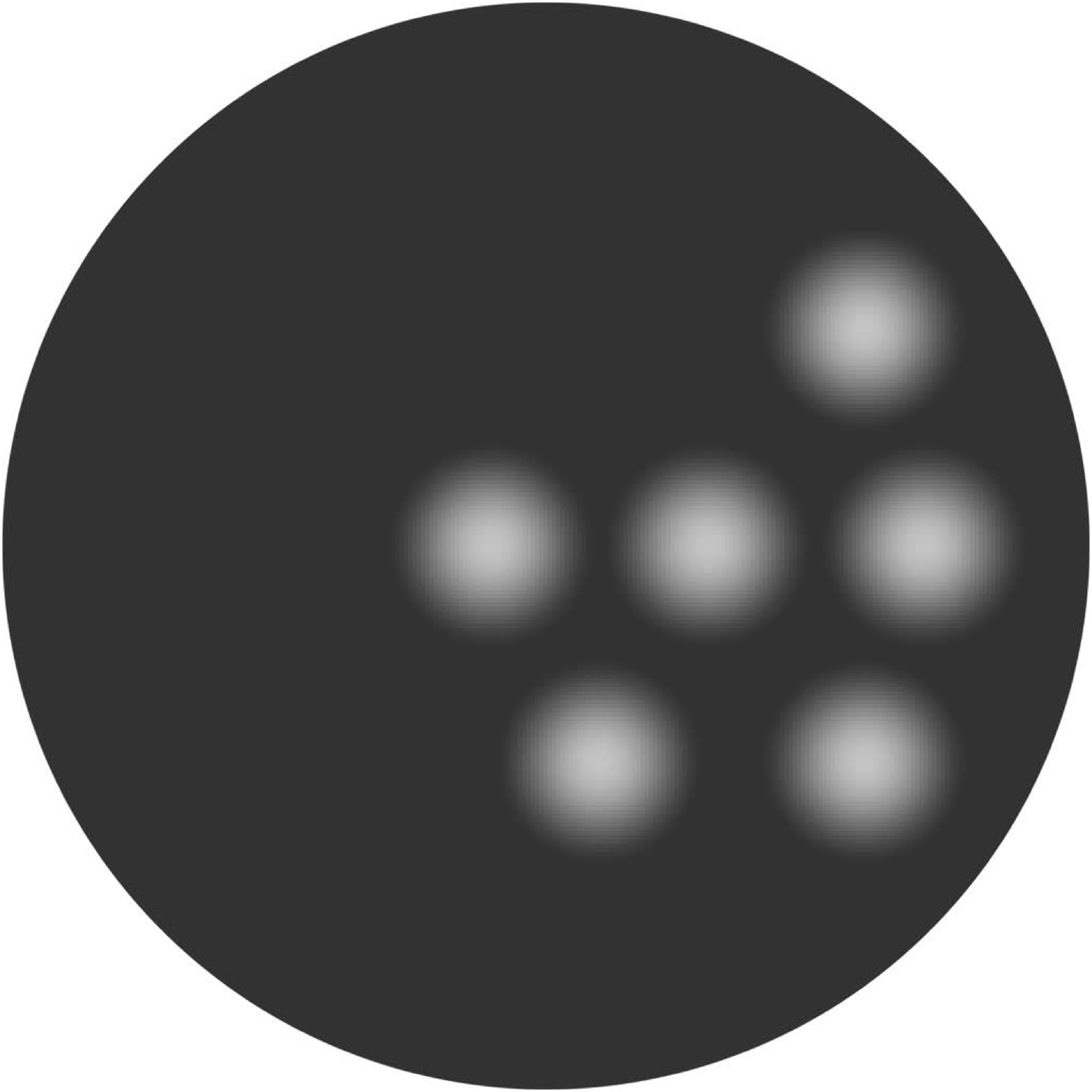}}
\subfigure[Reconstruction, 5.3 sec.]{
\includegraphics[width=1.75in,height=1.75in]{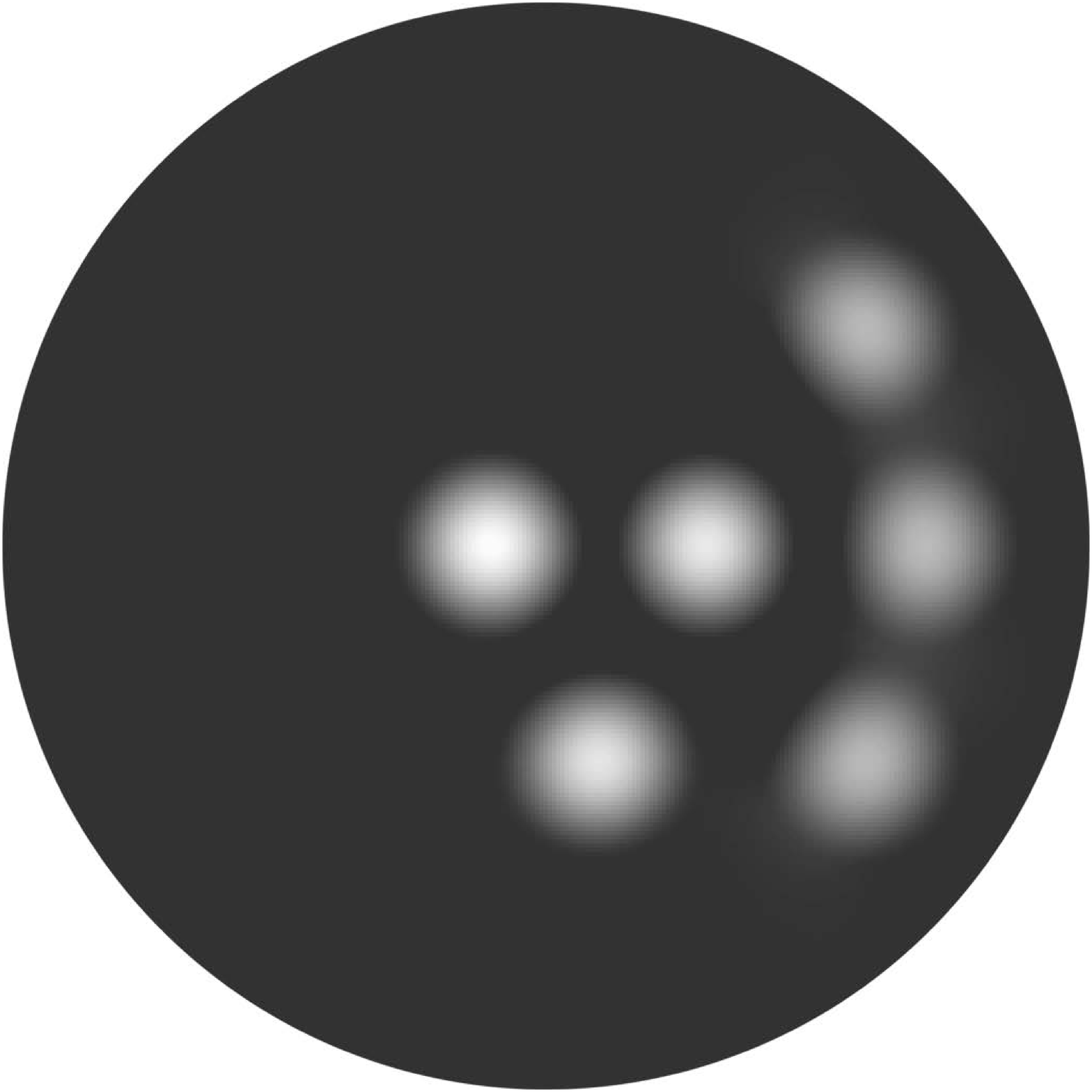}}
\subfigure[Reconstruction, 10.6 sec.]{
\includegraphics[width=1.75in,height=1.75in]{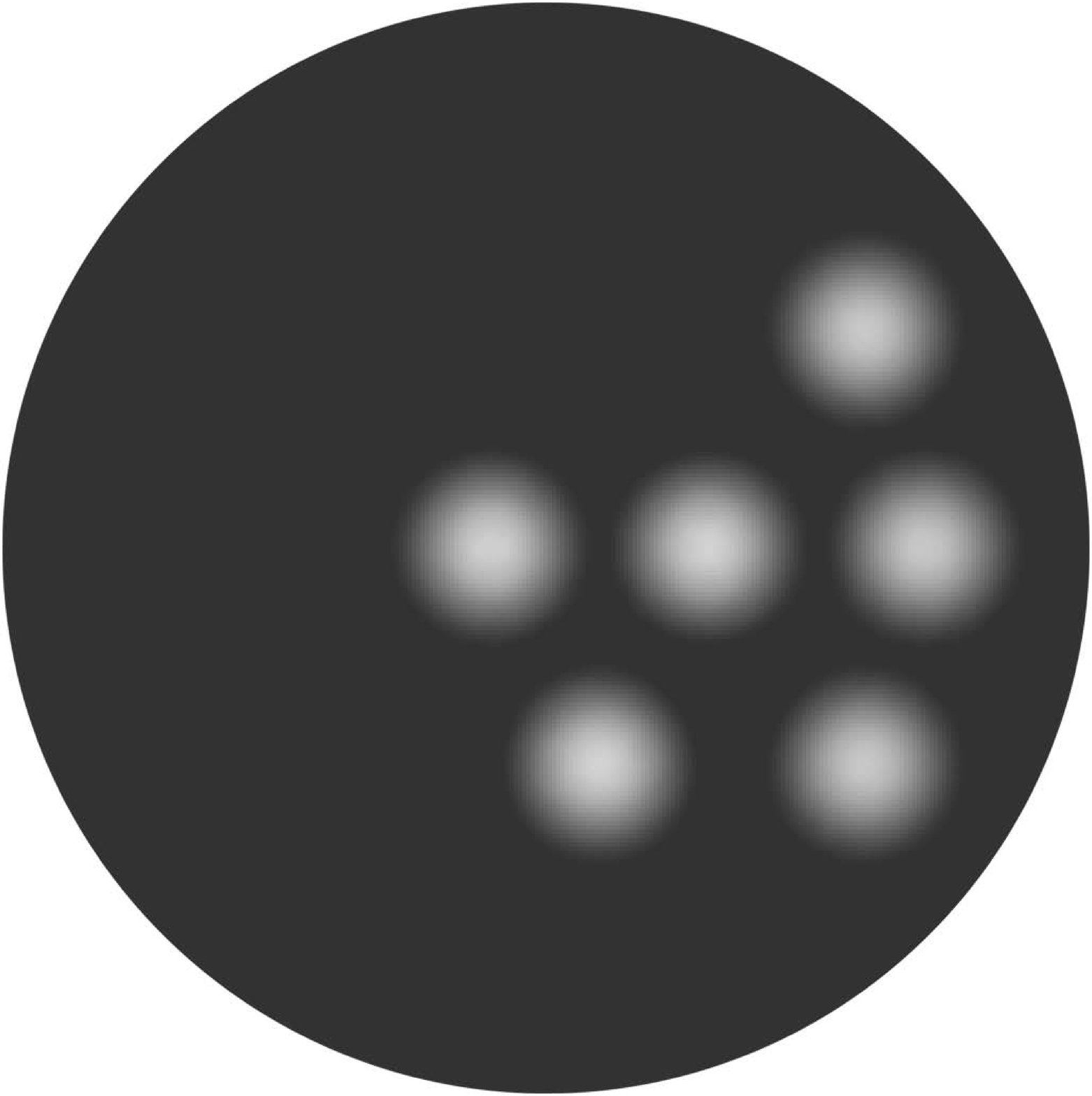}}
\end{center}
\caption{Reconstructions in the unit disk with model measurement times 5.3
and 10.6 seconds, correspondingly}
\end{figure}

\subsubsection{Numerical example}

The following numerical example illustrates how gradual time reversal works
in a circular domain. In our simulation the phantom was modeled by a sum of
three rotationally symmetric smooth functions defined on the unit disk, as
shown in Figure~1(a). The measurements were simulated by tabulating the
series solution (equations~(\ref{E:diskforward}) and (\ref{E:diskcoefs})) at
1024 equispaced points on the unit circle that represent detectors. Two
reconstructions were computed using gradual time reversal with $T=5.3$ sec.
and $T=10.6$ sec. of model time. (For comparison, $2$ sec. of model time is
the time needed for a wave to propagate once along the diameter of the
disk.) These computations were performed using an efficient algorithm~\cite%
{Ben} for solving the wave equation on a reduced polar grid in a circular
domain. The reconstructed images are shown in Figure~1(a) and (b),
correspondingly. While the reconstruction with $T=5.3$ sec. is not quite
accurate, the image corresponding to $T=10.6$ sec. looks quite close to the
original in a gray scale picture. Simulations with larger $T$ (not shown
here) yield images that are very close to the phantom not only visually but
quantitatively as well.

\subsection{Rectangular cavity}

A rectangular resonant cavity arises naturally when the object is surrounded
by flat detector assemblies that act as reflectors (see, for example,~\cite%
{Cox}). A very fast Fourier-based reconstruction algorithm has been
developed by the authors jointly with B.T. Cox for such a configuration.
However, gradual time reversal using finite differences on a Cartesian grid
is a much simpler (although slower) method, and, unlike the former
algorithm, it can be easily implemented for a variable (but known) speed of
sound. In addition, the simplicity of a rectangular domain allows us to
illustrate several interesting properties of gradual time reversal.

For simplicity, we present below the 2D case; extension to the 3D is
straightforward.

\subsubsection{Time reversal with full data}

\paragraph{Square domain}

\begin{figure}[t]
\begin{center}
\subfigure[Phantom]{
\includegraphics[width=1.75in,height=1.75in]{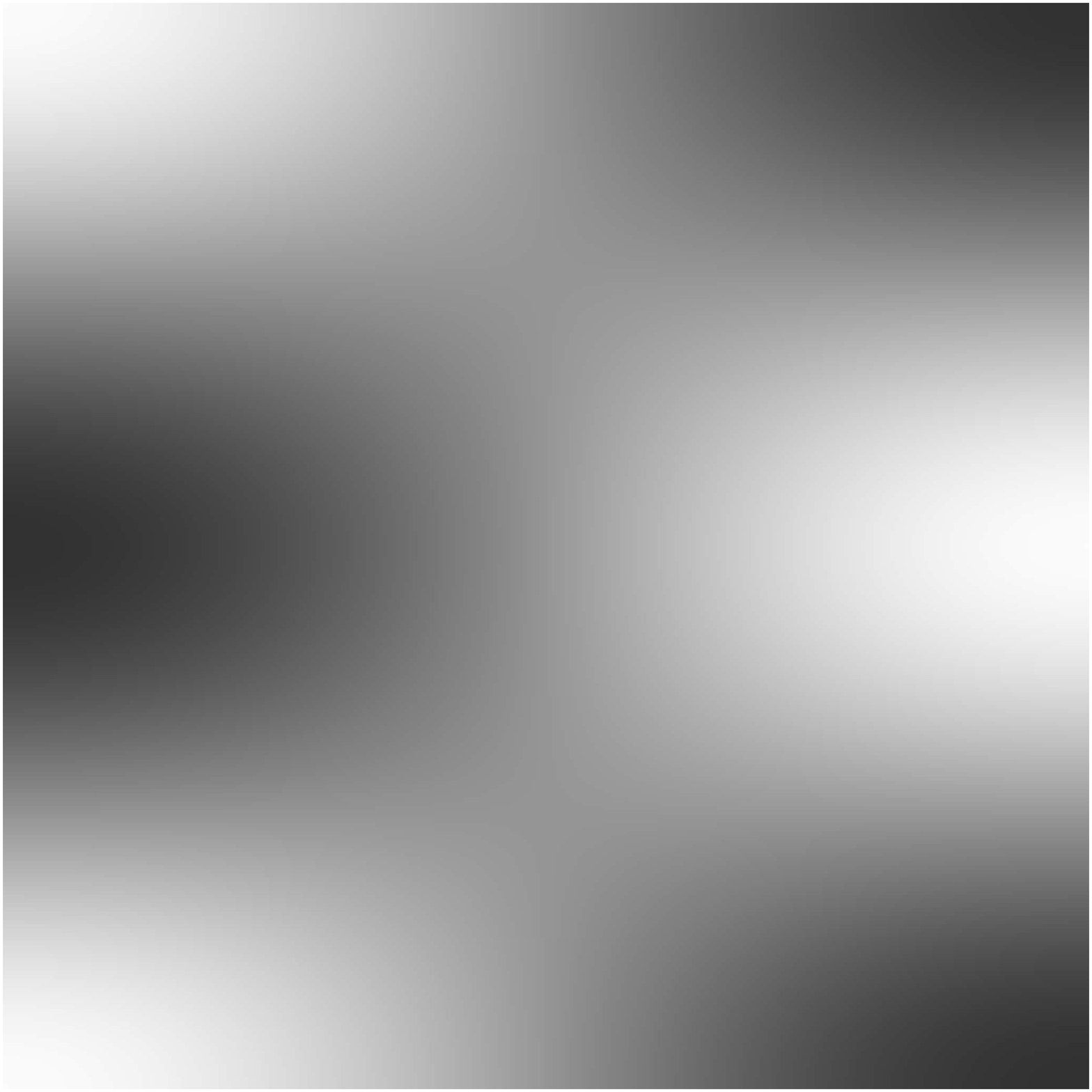}}
\subfigure[Reconstruction, 100 sec.]{
\includegraphics[width=1.75in,height=1.75in]{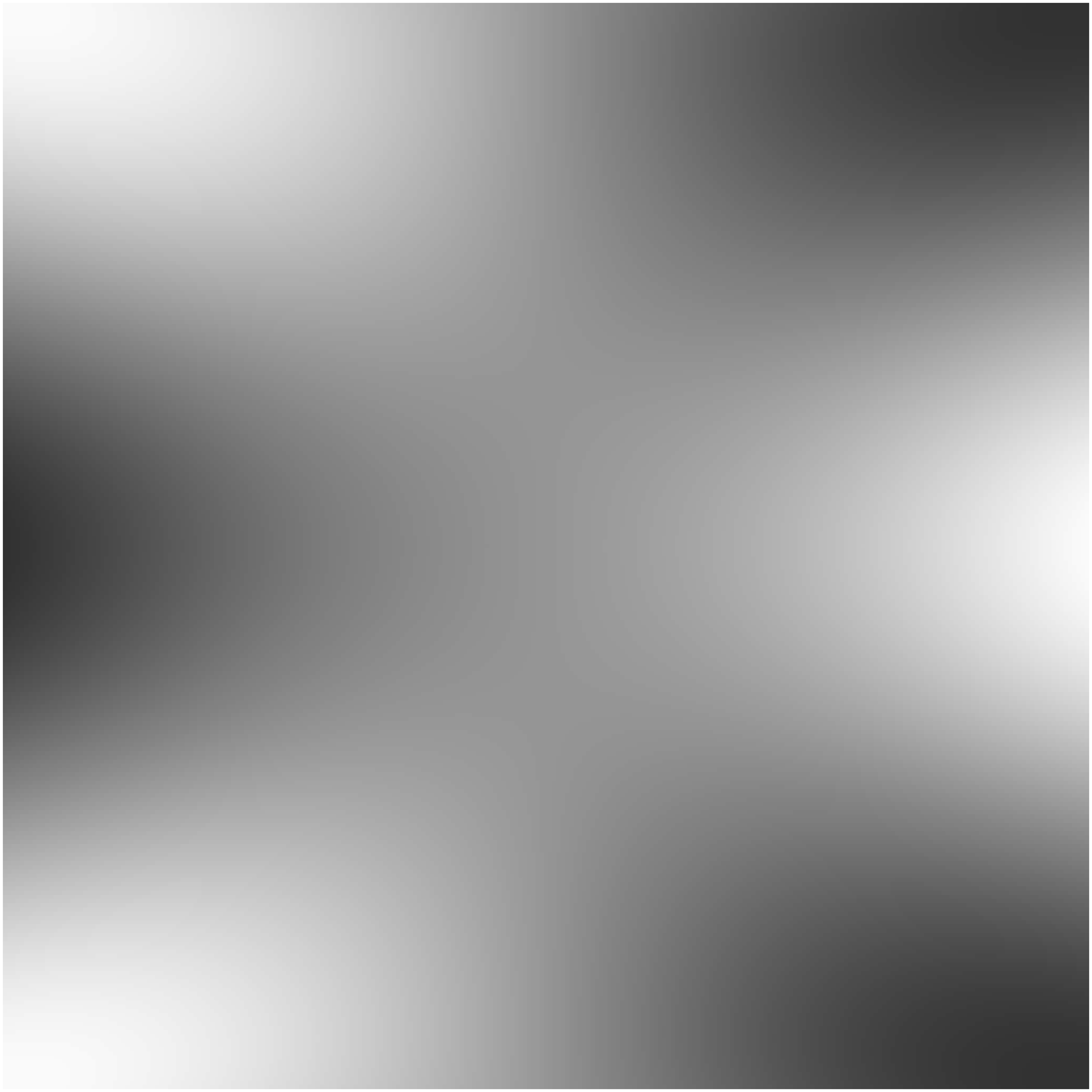}}
\subfigure[Difference image]{
\includegraphics[width=1.75in,height=1.75in]{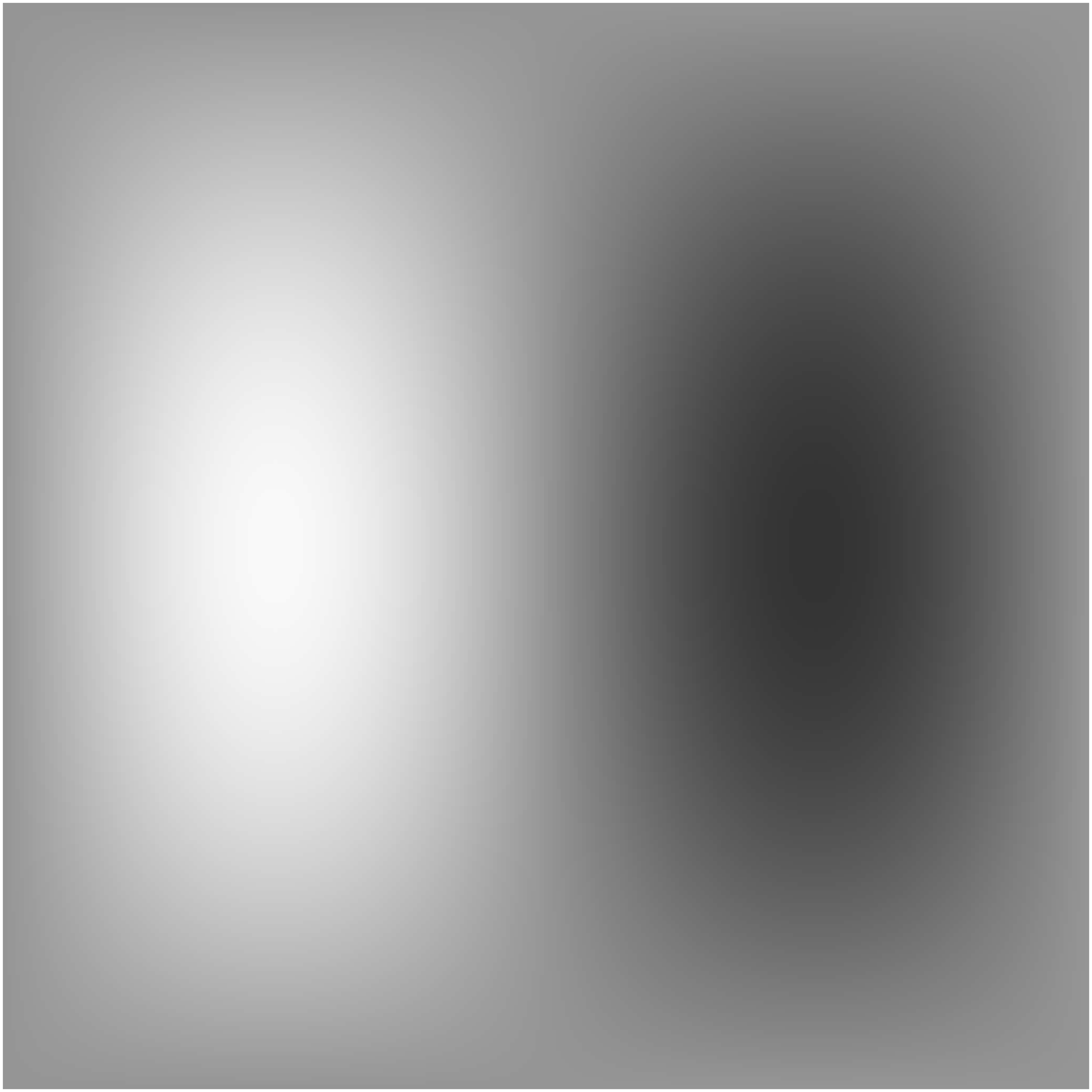}}
\end{center}
\caption{Reconstruction of a phantom $\frac{2}{\protect\pi}\cos
x_{1}\cos2x_{2}$ in a square, showing incomplete convergence. The difference
between the phantom and the reconstruction (shown on a different gray scale)
is very close to $\frac{64}{9\protect\pi^{3}}\sin2x_{1}\sin x_{2}$ }
\end{figure}
First, let's assume that the domain $\Omega$ is a square $(0,\pi
)\times(0,\pi)$ and that the data are measured on the whole boundary, i.e. $%
\Sigma_{1}=\Sigma.$ It is convenient to number the eigenfunctions using
double indices. In particular, the eigenfunctions $\varphi_{n,l}$ and
eigenvalues $\lambda_{n,l}$ needed for solving the forward problem are those
of the Neumann Laplacian on $\Omega$:%
\begin{equation}
\varphi_{n,l}(x)=N_{n,l}\cos nx_{1}\cos
lx_{2},\quad\lambda_{n,l}^{2}=n^{2}+l^{2},\quad n,l=0,1,2,3,...,
\label{E:Neumannsquare}
\end{equation}
with normalization constants $N_{n,l}=\frac{2}{\pi}$ if $n,l>0,$ $N_{0,0}=%
\frac{1}{\pi},$ $N_{0,1}=N_{1,0}=\frac{\sqrt{2}}{\pi}.$ The eigenfunctions $%
\psi_{k,m}$ and eigenvalues $\nu_{k,m}$ arising in the analysis of time
reversal are those of the Dirichlet Laplacian on $\Omega$:%
\begin{equation}
\psi_{k,m}(x)=\frac{2}{\pi}\sin kx_{1}\sin
mx_{2},\quad\gamma_{k,m}^{2}=k^{2}+m^{2},\quad k,m=0,1,2,3,...  \notag
\end{equation}
We notice that in the cases $(k,m)=(n,l)$ or $(m,k)=(n,l)$ the eigenvalues
coincide; corresponding eigenfunctions are orthogonal in the former case but
not always in the latter case:%
\begin{align*}
\lambda_{k,m} & =\lambda_{m,k}=\gamma_{k,m}=\gamma_{m,k}, \\
(\psi_{k,m},\varphi_{k,m})_{L_{2}} & =0,\quad k,m=0,1,2,3,... \\
(\psi_{m,k},\varphi_{k,m})_{L_{2}} & =0\quad\text{if }k-m\text{ is even,} \\
(\psi_{m,k},\varphi_{k,m})_{L_{2}} & \neq0\quad\text{if }k-m\text{ is odd.}
\end{align*}
In addition, there are some other pairs of coinciding eigenvalues with
non-orthogonal eigenfunctions (e.g. $\lambda_{8,1} = \lambda_{7,4} =
\gamma_{8,1} = \gamma_{7,4}$). This implies that the result of the gradual
time reversal will not converge to $f(x)$. The residual error will be given
by an expression with infinite number of terms similar to (\ref%
{E:errorexpression}) (with modifications needed to account for the double
indexing of eigenfunctions).

For a simple example, consider initial conditions $f(x)=\varphi_{1,2}(x)=%
\frac{2}{\pi}\cos x_{1}\cos2x_{2}.$ Then all coefficients $u_{n,l}$ (except $%
u_{1,2}=1$) are equal to 0. This implies that gradual time reversal will
converge to $f(x)+\mathrm{Err}(x)$ with
\begin{equation}
\mathrm{Err}(x)=-(\psi_{2,1},\varphi_{1,2})_{L_{2}}\psi_{2,1}(x)=\frac{32}{9\pi^{2}}%
\psi_{2,1}(x)=\frac{64}{9\pi^{3}}\sin2x_{1}\sin x_{2}.  \label{E:errorsquare}
\end{equation}
Figure~2 presents the results of numerical simulation we ran to further
illustrate this situation. The model time in this example was about $314$
sec. which corresponds to a hundred bounces of a wave between the opposite
sides of a square. \ The phantom $f(x)$ is shown in part (a) of the figure,
and part (b) shows the reconstruction. Figure~2(c) presents the error in the
reconstruction (shown on a different gray scale); it turns out to be very
close to the theoretical prediction $\mathrm{Err}(x)$ given by equation~(\ref%
{E:errorsquare}).

\paragraph{Rectangle with incommensurable sides}

Let us now consider a rectangular domain $(0,A)\times(0,B).$ Let us assume
that $A$ and $B$ are incommensurable numbers, for example $A$ is rational
and $B$ is irrational. Now
\begin{align*}
\varphi_{n,l}(x) & =N_{n,l}\cos\frac{\pi nx_{1}}{A}\cos\frac{\pi nx_{2}}{B}%
,\quad\lambda_{n,l}^{2}=\frac{\pi^{2}n^{2}}{A^{2}}+\frac{\pi^{2}l^{2}}{B^{2}}%
,\quad n,l=0,1,2,3,..., \\
\psi_{k,m}(x) & =\frac{2\pi}{\sqrt{AB}}\sin\frac{\pi kx_{1}}{A}\sin\frac{\pi
mx_{2}}{B},\quad\gamma_{k,m}^{2}=\frac{\pi^{2}k^{2}}{A^{2}}+\frac{%
\pi^{2}m^{2}}{B^{2}},\quad k,m=0,1,2,3,...,
\end{align*}
where $N_{n,l}=\sqrt{(\varphi_{n,l},\varphi_{n,l})_{L_{2}}}$ are the
normalization constants. The only situations when values of $\lambda_{n,l}$
and $\gamma_{k,m}$ coincide is when $(k,m)=(n,l).$ However, it is easy to
check by direct computation that in this case $(\psi_{n,l},\varphi
_{n,l})_{L_{2}}=0$ and the error terms in the form (\ref{E:errorexpression})
vanish. Therefore, according to the analysis of Section \ref{S:Theory}, the
result of the gradual time reversal will converge weakly (in $H^{1}(\Omega))$
to $f(x)$ as $T\rightarrow\infty.$

\subsubsection{Time reversal with partial data}

\begin{figure}[t]
\begin{center}
\subfigure[Phantom]{
\includegraphics[width=1.75in,height=1.75in]{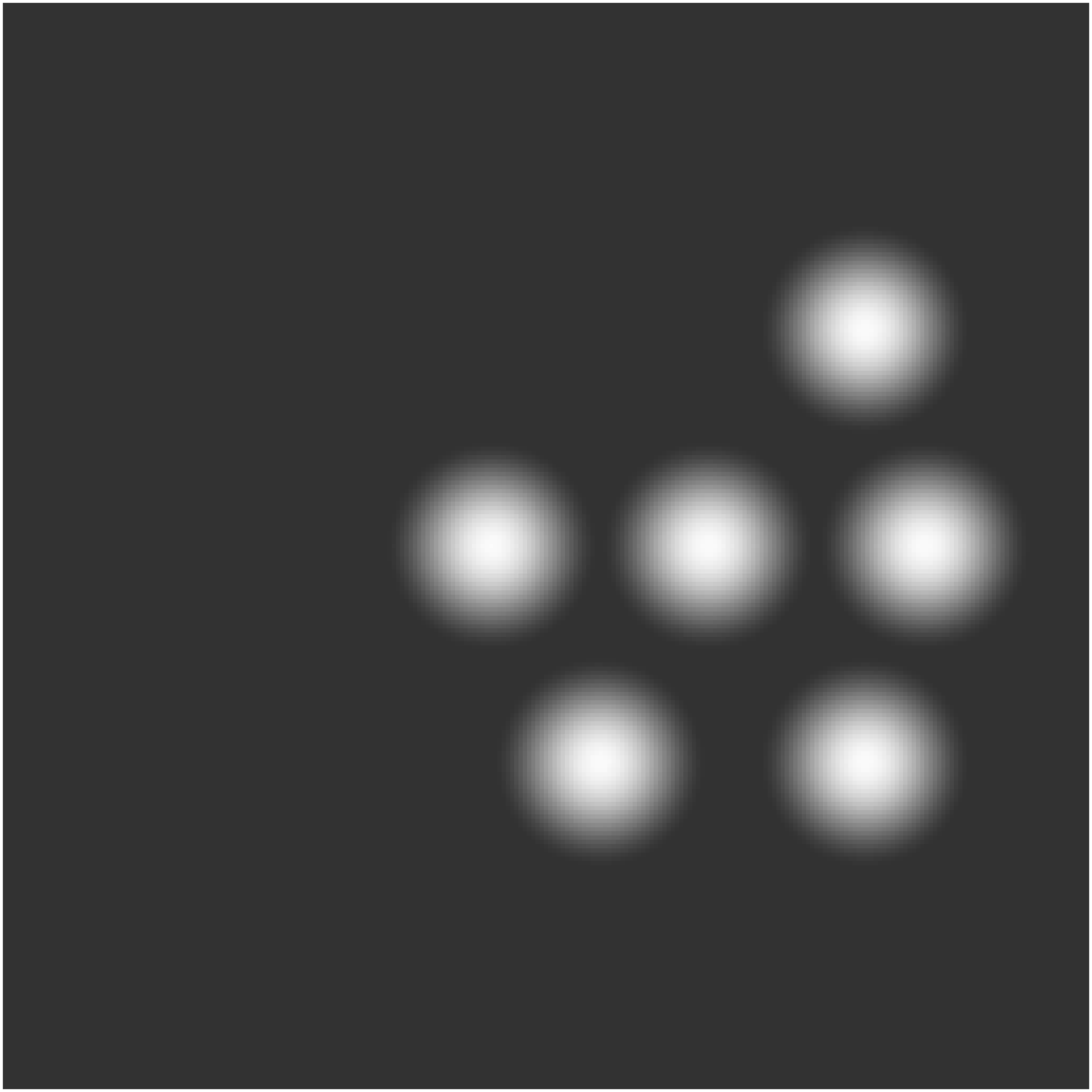}}
\subfigure[Reconstruction, 100 sec.]{
\includegraphics[width=1.75in,height=1.75in]{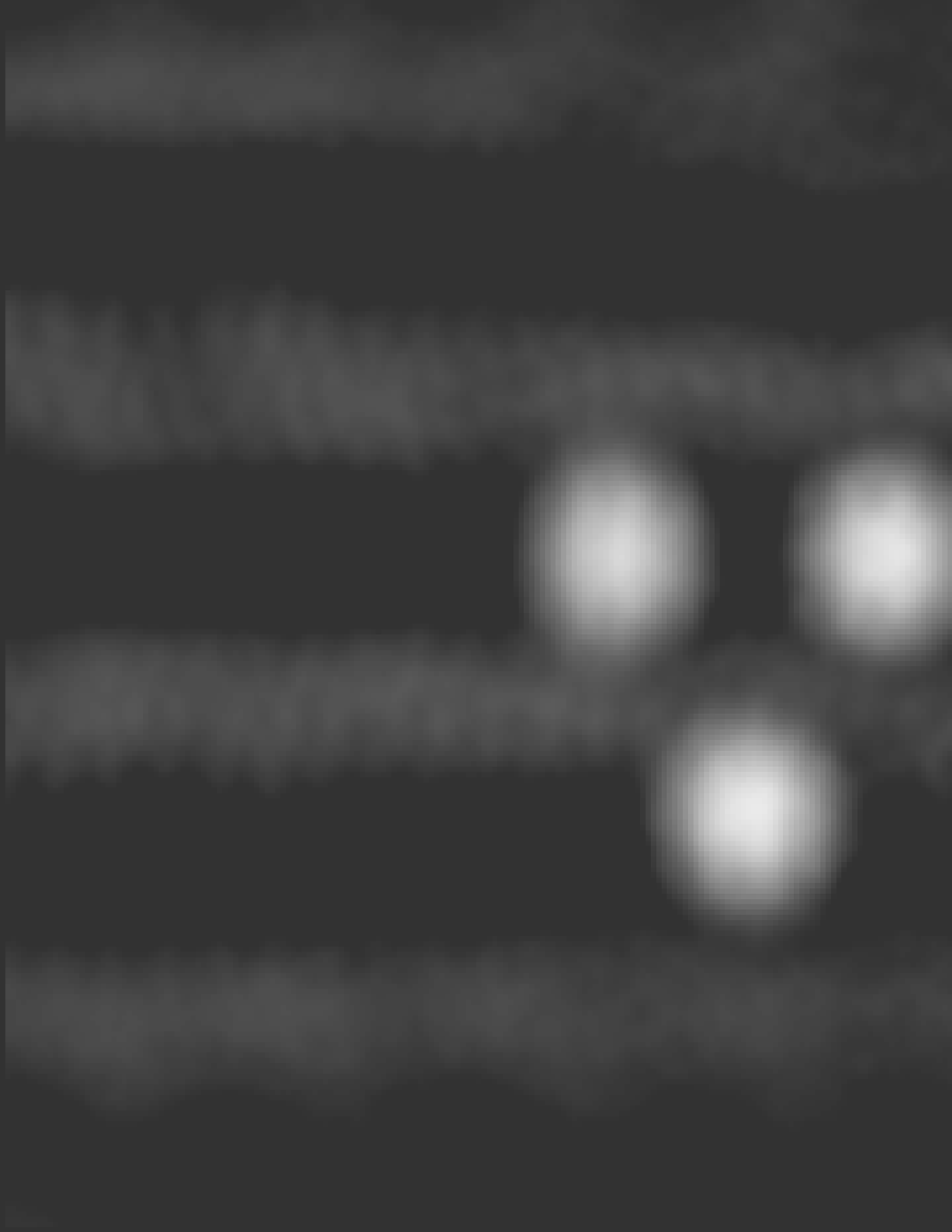}}
\subfigure[Difference image]{
\includegraphics[width=1.75in,height=1.75in]{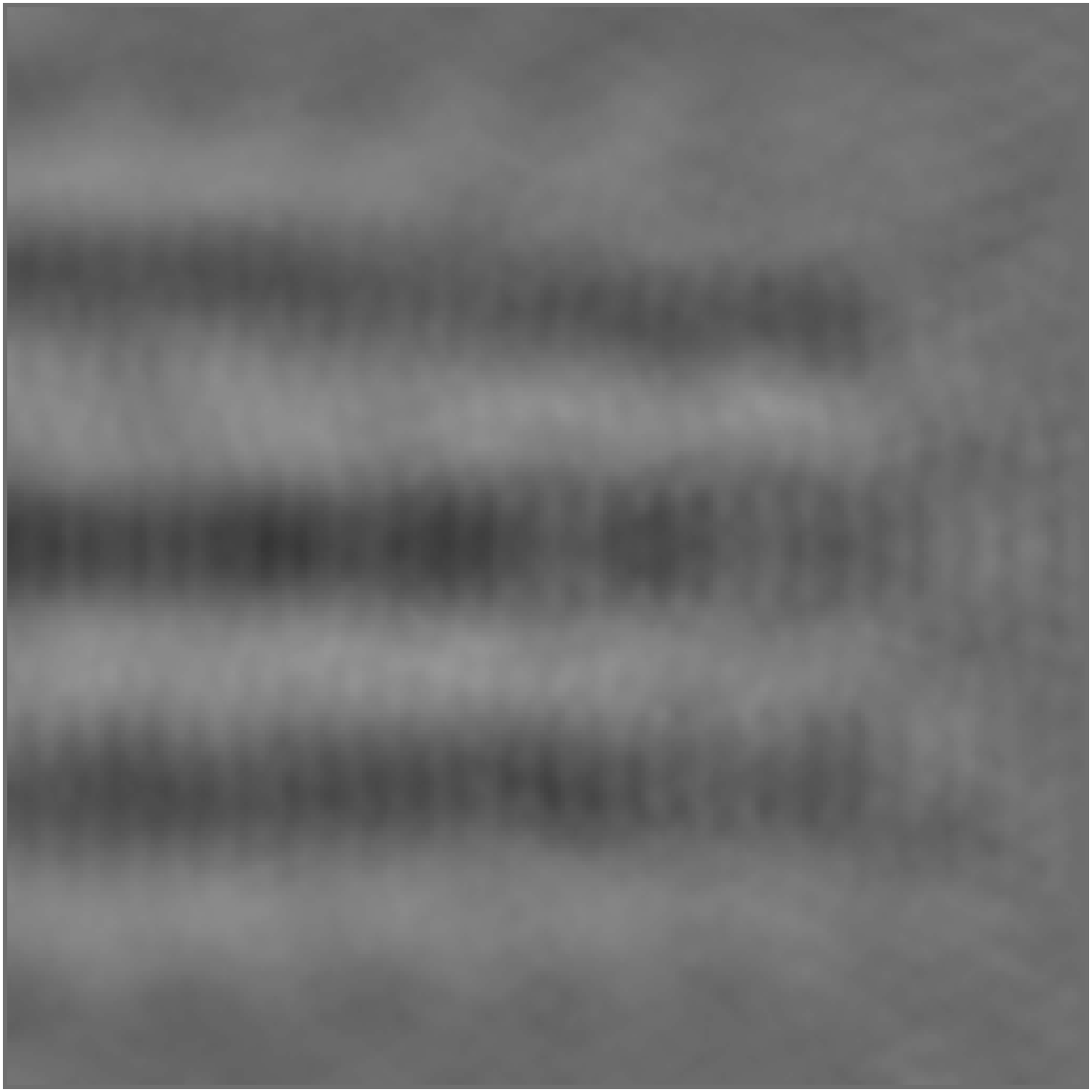}}
\end{center}
\caption{Reconstruction in a square (a) phantom (b) reconstruction from the
data measured on the right side, $T=490$ sec. (c) error shown on a shifted
gray scale}
\end{figure}

We return to the case of the square domain $\Omega =$ $(0,\pi )\times (0,\pi
)$, \ but this time assume that the measurements are made on only one side
of the square, corresponding to $x_{1}=\pi $ (this side plays the role of $\
\Sigma _{1}).$ In this case the eigenfunctions $\varphi _{n,l}(x)$ needed to
solve the forward problem are still given by equation~(\ref{E:Neumannsquare}%
). To analyze gradual time reversal one needs the eigenfunctions $\psi
_{k,m}(x)$ of the Laplacian satisfying Dirichlet boundary condition on the
right side of the square and Neumann conditions on the three other sides:
\begin{equation}
\psi _{k,m}(x)=\frac{2}{\pi }\cos \left( \left( k+\frac{1}{2}\right)
x_{1}\right) \cos mx_{2},\quad \gamma _{k,m}^{2}=\left( k+\frac{1}{2}\right)
^{2}+m^{2},\quad k,m=0,1,2,3,...  \notag
\end{equation}%
It follows immediately that, since $\lambda _{n,l}^{2}=n^{2}+l^{2}$ is an
integer number and $\gamma _{k,m}^{2}$ is not, $\lambda _{n,l}$ and $\gamma
_{k,m}$ never coincide, and therefore the result of gradual time reversal
converges weakly in $H^{1}(\Omega )$ to the sought initial condition $f(x)$
as $T\rightarrow \infty .$ It may seem counter-intuitive that time reversal
with partial data yields weak convergence while time reversal with full data
does not converge. However, this just means that the latter application of
this technique is flawed and does not extract full information from the data.

A further look at the eigenfunctions $\psi _{k,m}(x)$ and $\varphi _{n,l}(x)$
reveals that they are orthogonal if $m\neq l,$and, therefore, series
representing the error in gradual time reversal will partially decouple
similarly to those arising in the analysis of the circular domain. \ In
general, the rate of convergence of individual modes depends on the
differences of eigenvalues (see, for example equations~(\ref{E:converg}) and
(\ref{E:C4})). If some of these differences are small than the corresponding
constant $C_{4}(M,k)$ is large and the convergence will be slow. In the case
of the square domain, due to partial orthogonality of eigenmodes, we only
need to consider the following differences%
\begin{equation}
|\lambda _{n,m}-\gamma _{k,m}|=\left\vert \sqrt{n^{2}+m^{2}}-\sqrt{\left( k+%
\frac{1}{2}\right) ^{2}+m^{2}}\right\vert .  \notag
\end{equation}%
For large values of $m$ and for $x$ of order of 1 the following Taylor
expansion holds%
\begin{equation}
\sqrt{m^{2}+x}=m\sqrt{1+x/m^{2}}\thickapprox m+\frac{x}{2m}.  \notag
\end{equation}%
Therefore, if $n$ and $k$ are of order of 1 and $m$ is large%
\begin{equation}
|\lambda _{n,m}-\gamma _{k,m}|\thickapprox \frac{n^{2}-\left( k+\frac{1}{2}%
\right) ^{2}}{2m};  \notag
\end{equation}%
if we let $m$ grow to $\infty $ with fixed $n$ and $k$, the quantity $%
|\lambda _{n,m}-\gamma _{k,m}|$ converges to 0, which means that convergence
of the corresponding modes is getting slower. This implies that even for
large values of $T$, the error will contain components with large $m$ and
small $k$ manifesting themsselves as waves propagating in the near-vertical
direction.

The following numerical example illustrates this situation. Figure~3(a)
presents the phantom (the same as in the disk simulation), Figure~3(b) shows
the reconstruction corresponding to $T=490$ sec. (or to approximately 156
bounces of a wave between the opposite sides of the square). Figure~3(c)
demonstrates the reconstruction error on a shifted gray scale. One can see
that in spite of a large value of $T$ in the simulation the error remains
noticeable; it consists mostly of the waves with vertical wavefronts as
predicted by the above analysis.

This suggests that, if the data is measured on two perpendicular sides of
the square, one could use the following reconstruction algorithm: run
gradual time reversal separately for each side, filter the two so-obtained
images to remove the waves propagating parallel to the acquisition sides,
and then add resulting images together. This, indeed, can be done; however,
since a fast and rigorously proven method~\cite{Cox} is available for such
an acquisition scheme, we will not elaborate further on this topic.

\section*{Acknowledgements}

The authors would like to thank G.~Berkolaiko, L.~Friedlander, L.~Hermi, and
P.~Kuchment for helpful discussions. The second author gratefully
acknowledges support by the NSF, through award NSF/DMS-1211521.

\section*{Appendix}

In the case of a unit disk domain the eigenvalues $\nu_{m,k}$ and $%
\lambda_{m,k}$ of the Dirichlet and Neumann Laplacians coincide,
correspondingly, \ with the positive roots $j_{m,k}$ and $j_{m,k}^{\prime}$
of the Bessel functions $J_{m}(x)$ and its derivative $J_{m}^{\prime}(x)$
(with the exception that $x=0$ is counted as the first zero of $%
J_{0}^{\prime}(x)$ and not counted for $J_{m}^{\prime}(x)$ with $m>0)$. In
this section we establish the fact that the distance between these roots is
uniformly bounded from below (in the sense of equation~(\ref{E:eigenbounds}%
)).

It is well known that the roots of $J_{m}(x)$ and $J_{m}^{\prime}(x)$
interlace \cite{Watson,Abramowitz}:%
\begin{equation}
j_{m,k}<j_{m,k+1}^{\prime}<j_{m,k+1},\qquad k=1,2,3,...  \notag
\end{equation}
It is also known that for $m>0$ all non-zero roots are greater than $m$;
more precisely~\cite{Watson}:%
\begin{equation}
j_{m,1}>j_{m,1}^{\prime}>\sqrt{m(m+2)},\qquad m>0,  \notag
\end{equation}
and that asymptotically (for large $k$ and fixed $m)$ these roots become
equispaced \cite{Abramowitz}:
\begin{align}
j_{m,k} & \sim\left( k+\frac{1}{2}m-\frac{1}{4}\right) \pi ,
\label{E:asympt1} \\
j_{m,k}^{\prime} & \sim\left( k+\frac{1}{2}m-\frac{3}{4}\right) \pi.
\label{E:asympt2}
\end{align}
Therefore, as $k\rightarrow\infty$%
\begin{align*}
j_{m,k}-j_{m,k}^{\prime} & \rightarrow\frac{\pi}{2}, \\
j_{m,k+1}^{\prime}-j_{m,k} & \rightarrow\frac{\pi}{2}.
\end{align*}
We, however, need a uniform lower bound on $|j_{m,k}-j_{m,l}^{\prime}|$
valid for all values of $m,$ $k,$ and $l.$ Such a bound on the distance
between the roots of $J_{m}(x)$ is known \cite{Elbert}:
\begin{equation}
|j_{m,k}-j_{m,l}|>\pi|k-l|,\qquad m>1/2.  \label{E:Elbert}
\end{equation}
In addition, we need the following result which we have not found in the
literature:

\begin{lemma}
For any $m\geq1$ ($m$ does not have to be integer) the distance between the
adjacent roots of $J_{m}^{\prime}$ and $J_{m}$ is uniformly bounded:
\begin{align}
j_{m,k}-j_{m,k}^{\prime} & \geq\sqrt{2},\qquad k=2,3,4,...  \label{E:lemma1}
\\
j_{m,k+1}^{\prime}-j_{m,k} & \geq1,\qquad k=1,2,3,...  \label{E:lemma2}
\end{align}
\end{lemma}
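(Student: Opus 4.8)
The plan is to track the oscillation of $J_m$ by a Pr\"ufer-type phase function which, unlike the Liouville substitution $w=\sqrt{x}\,J_m$, keeps the critical points of $J_m$ visible. Write Bessel's equation as $y''+\tfrac1x y'+q(x)y=0$ with $y=J_m$ and $q(x)=1-m^2/x^2$, and introduce $\theta(x)$ by $J_m(x)=\rho(x)\sin\theta(x)$, $J_m'(x)=\rho(x)\cos\theta(x)$, where $\rho=\sqrt{J_m^2+(J_m')^2}>0$ (a nontrivial solution has no common zero of $y$ and $y'$, by uniqueness). A short computation gives
\[
\theta'(x)=\cos^2\theta+q(x)\sin^2\theta+\frac{1}{2x}\sin 2\theta .
\]
The zeros of $J_m$ are exactly the points with $\theta\equiv 0\pmod{\pi}$ and the zeros of $J_m'$ those with $\theta\equiv\tfrac{\pi}{2}\pmod{\pi}$; since $\theta$ is strictly monotone on the range of interest (shown below) and the two root families interlace (the quoted inequality $j_{m,k}<j_{m,k+1}'<j_{m,k+1}$), the $x$-values at which $\theta$ hits a multiple of $\tfrac{\pi}{2}$ alternate between the two kinds, so along either of the two gaps in the Lemma the phase sweeps exactly $\tfrac{\pi}{2}$. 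Hence each gap equals $\int \frac{d\theta}{\theta'}$ taken over a $\theta$-interval of length $\tfrac{\pi}{2}$, and it is enough to bound $\theta'$ from above along that interval.

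First I would restrict to $x>\sqrt{m(m+2)}$, which by the quoted bound $j_{m,1}>j_{m,1}'>\sqrt{m(m+2)}$ contains every root appearing in the Lemma. There $0<q(x)<1$, and a brief estimate ($q(x)>\tfrac{2}{m+2}$ while $\tfrac{1}{2x}\le\tfrac{1}{2\sqrt{m(m+2)}}$, hence $q(x)>\tfrac{1}{2x}$ once $m\ge1$) gives $\theta'\ge q(x)-\tfrac{1}{2x}>0$; this is the monotonicity used above, and it also locates the phase precisely: along a gap $j_{m,k}-j_{m,k}'$ the phase runs modulo $\pi$ through $(\tfrac{\pi}{2},\pi)$, where $\sin 2\theta<0$, while along a gap $j_{m,k+1}'-j_{m,k}$ it runs modulo $\pi$ through $(0,\tfrac{\pi}{2})$, where $\sin 2\theta\ge0$.

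This sign asymmetry produces the two constants. For (\ref{E:lemma1}), $\sin 2\theta<0$ along the arc, so $\theta'\le\cos^2\theta+q(x)\sin^2\theta\le1$ and therefore $j_{m,k}-j_{m,k}'>\int_0^{\pi/2}d\theta=\tfrac{\pi}{2}>\sqrt2$ (the excluded case $k=1$ would follow the same way). For (\ref{E:lemma2}) the best bound along the arc is $\theta'\le1+\tfrac{1}{2x}\le1+\tfrac{1}{2\sqrt3}$, using $x>\sqrt{m(m+2)}\ge\sqrt3$ for $m\ge1$, so $j_{m,k+1}'-j_{m,k}>\frac{\pi/2}{1+1/(2\sqrt3)}$, and this exceeds $1$ because $2+1/\sqrt3<3<\pi$.

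The step I expect to be the real obstacle is dealing with the first-order term $\tfrac1x y'$ in Bessel's equation. The standard reduction $w=\sqrt{x}\,J_m$ removes it --- and instantly gives the $\pi$-spacing of the $j_{m,k}$ by a Sturm comparison with $v''+v=0$ --- but it turns the zeros of $J_m'$ into the zeros of $\tfrac12 J_m+xJ_m'$ and thus loses sight of the critical points altogether, which is presumably why a bound relating $j_{m,k}$ to $j_{m,l}'$ is not readily available. Pushing the Pr\"ufer phase through the unreduced equation treats both families at once, at the cost of the $\tfrac{1}{2x}$ term, which is harmless once $m\ge1$; the one thing that genuinely needs care is the inequality $\theta'>0$ throughout the range --- equivalently, that no turning point of the equation lies above $j_{m,1}'$ --- and it is exactly here that the hypothesis $m\ge1$ and the quoted bound $j_{m,1}'>\sqrt{m(m+2)}$ are used. (A fully elementary alternative, integrating the equation for $J_m'$ twice on each arc where $J_m''$ has a fixed sign, also reaches $\sqrt2$ without effort but seems to need a lower bound on $j_{m,1}$ a little better than $\sqrt{m(m+2)}$ in order to clear the constant $1$ in (\ref{E:lemma2}).)
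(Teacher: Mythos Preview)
Your Pr\"ufer-phase argument is correct and takes a genuinely different route from the paper. The paper works directly with the self-adjoint form $(xJ_m')'=-\tfrac{x^2-m^2}{x}J_m$: it integrates once from the extremum $j_{m,k+1}'$ to express $J_m'(x)$ as an integral of $J_m$, then integrates again across each half of $I_k=(j_{m,k},j_{m,k+1})$, bounds $J_m$ by its maximum on $I_k$, and cancels that maximum. On the right half this gives $1\le\tfrac12(j_{m,k+1}-j_{m,k+1}')^2$, i.e.\ the constant $\sqrt2$ exactly; on the left half it gives $1\le(j_{m,k+1}'-j_{m,k})^2\,\dfrac{j_{m,k+1}'-1}{j_{m,k}}$ (this is where $m\ge1$ enters), from which $j_{m,k+1}'-j_{m,k}\ge1$ follows by a one-line contradiction: if the gap were $<1$ then $j_{m,k+1}'-1<j_{m,k}$ and the right side would be $<1$. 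So your parenthetical guess that the double-integration route ``seems to need a lower bound on $j_{m,1}$ a little better than $\sqrt{m(m+2)}$'' to reach the constant $1$ is not borne out---the paper gets there with no additional input.

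Your method buys a sharper constant in the first estimate ($\pi/2$ in place of $\sqrt2$), treats both gaps by the same mechanism, and makes transparent why the two constants differ: the sign of $J_mJ_m'$, hence of $\sin2\theta$, along the arc either helps or hurts the upper bound on $\theta'$. The paper's method is more elementary---just calculus, no phase variable---and produces $\sqrt2$ as the natural output rather than as a corollary of a stronger bound. Both arguments rest only on the quoted facts $j_{m,1}'>\sqrt{m(m+2)}$ and the interlacing $j_{m,k}<j_{m,k+1}'<j_{m,k+1}$.
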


\begin{proof}
Let us consider the open interval $I_{k}=(j_{m,k},j_{m,k+1})$ between the
adjacent zeros of $J_{m}.$ Let us first assume that $J_{m}$ is positive on $%
I_{k}.$ Then $J_{m}$ attains its local maximum $\max\limits_{I_{k}}J_{m}(x)$
at the point $j_{m,k+1}^{\prime }\in I_{k}.$ Recall that $J_{m}$ satisfies
the Bessel equation
\begin{equation}
x^{2}J_{m}^{\prime \prime }(x)+xJ_{m}^{\prime }(x)+(x^{2}-m^{2})J_{m}(x)=0,
\notag
\end{equation}%
which for $x>0$ can be re-written in the following form:%
\begin{equation}
\left( xJ_{m}^{\prime }(x)\right) ^{\prime }=-\frac{x^{2}-m^{2}}{x}J_{m}(x).
\label{E:Bessel-eq}
\end{equation}%
By integrating the latter equation from $j_{m,k+1}^{\prime }$ to $x$ we
obtain%
\begin{equation}
J_{m}^{\prime }(x)=-\frac{1}{x}\int\limits_{j_{m,k+1}^{\prime }}^{x}\frac{%
t^{2}-m^{2}}{t}J_{m}(t)dt.  \label{E:antider-1}
\end{equation}%
Now let us use equation~(\ref{E:antider-1}) and integrate $-J_{m}^{\prime
}(x)$ from $j_{m,k+1}^{\prime }$ to $j_{m,k+1},$ taking into account that $%
J_{m}(j_{m,k+1})=0,$ that $\max_{I_{k}}J_{m}(x)=J_{m}(j_{m,k+1}^{\prime }),$
and that $t>m$ on $(j_{m,k+1}^{\prime },j_{m,k+1})$:%
\begin{equation}
\max_{I_{k}}J_{m}(x)=\int\limits_{j_{m,k+1}^{\prime }}^{j_{m,k+1}}\frac{1}{x}%
\left[ \int\limits_{j_{m,k+1}^{\prime }}^{x}\frac{t^{2}-m^{2}}{t}J_{m}(t)dt%
\right] dx\leq \max_{I_{k}}J_{m}(x)\int\limits_{j_{m,k+1}^{\prime
}}^{j_{m,k+1}}\frac{1}{x}\left[ \int\limits_{j_{m,k+1}^{\prime }}^{x}tdt%
\right] dx.  \notag
\end{equation}%
Dividing both sides of the above inequality by $\max\limits_{I_{k}}J_{m}(x)$
yields:
\begin{align*}
1& \leq \int\limits_{j_{m,k+1}^{\prime }}^{j_{m,k+1}}\frac{1}{x}\left[
\int\limits_{j_{m,k+1}^{\prime }}^{x}tdt\right] dx=\int\limits_{j_{m,k+1}^{%
\prime }}^{j_{m,k+1}}\frac{1}{2x}(x-j_{m,k+1}^{\prime })(x+j_{m,k+1}^{\prime
})dx \\
& \leq \int\limits_{j_{m,k+1}^{\prime }}^{j_{m,k+1}}(x-j_{m,k+1}^{\prime
})dx=\frac{(j_{m,k+1}-j_{m,k+1}^{\prime })^{2}}{2}.
\end{align*}%
Therefore%
\begin{equation}
j_{m,k+1}-j_{m,k+1}^{\prime }\geq \sqrt{2}.  \notag
\end{equation}%
Similarly, in order to bound $j_{m,k+1}^{\prime }-j_{m,k},$ integrate
equation~(\ref{E:antider-1}) from $j_{m,k}$ to $j_{m,k+1}^{\prime }$:%
\begin{equation}
\max_{I_{k}}J_{m}(x)=\int\limits_{j_{m,k}}^{j_{m,k+1}^{\prime }}\frac{1}{x}%
\left[ \int\limits_{x}^{j_{m,k+1}^{\prime }}\frac{t^{2}-m^{2}}{t}J_{m}(t)dt%
\right] dx\leq \max_{I_{k}}J_{m}(x)\int\limits_{j_{m,k}}^{j_{m,k+1}^{\prime
}}\frac{1}{x}\left[ \int\limits_{x}^{j_{m,k+1}^{\prime }}\frac{t^{2}-m^{2}}{t%
}dt\right] dx.  \notag
\end{equation}%
By dividing both sides by $\max\limits_{I_{k}}J_{m}(x)$ we obtain:%
\begin{align*}
1 &\leq \int\limits_{j_{m,k}}^{j_{m,k+1}^{\prime}}\frac{1}{x}\left[
\int\limits_{x}^{j_{m,k+1}^{\prime}}\frac{t^{2}-m^{2}}{t}dt\right] dx\leq
\int\limits_{j_{m,k}}^{j_{m,k+1}^{\prime }}\frac{2}{x}\left[
\int\limits_{x}^{j_{m,k+1}^{\prime }}(t-m)dt\right] dx \\
&\leq \int\limits_{j_{m,k}}^{j_{m,k+1}^{\prime }}\frac{1}{x}\left[
j_{m,k+1}^{\prime }-x^{2}-2m(j_{m,k+1}^{\prime }-x)\right] dx\leq
\int\limits_{j_{m,k}}^{j_{m,k+1}^{\prime }}(j_{m,k+1}^{\prime }-x)\frac{%
j_{m,k+1}^{\prime }+x-2m}{j_{m,k}}dx
\end{align*}%
\begin{align*}
& \leq \frac{2(j_{m,k+1}^{\prime }-m)}{j_{m,k}}\int%
\limits_{j_{m,k}}^{j_{m,k+1}^{\prime }}(j_{m,k+1}^{\prime
}-x)dx=(j_{m,k+1}^{\prime }-j_{m,k})^{2}\frac{j_{m,k+1}^{\prime }-m}{j_{m,k}}
\\
& \leq (j_{m,k+1}^{\prime }-j_{m,k})^{2}\frac{j_{m,k+1}^{\prime }-1}{j_{m,k}}%
,
\end{align*}%
which implies $j_{m,k+1}^{\prime }-j_{m,k}\geq 1.$ This proves the Lemma for
all intervals $I_{k}$ on which $J_{m}$ is positive. In order to prove it for
the intervals where $J_{m}$ is negative, replace $J_{m}$ by $-J_{m}$ and
repeat the proof above.
\end{proof}

\begin{proposition}
There is constant $C>0$ such that for any integer numbers $m\geq0,$ $k\geq1,$
and $l\geq1$ the distance between the roots $j_{m,l}$ of $J_{m}$ and $%
j_{m,k}^{\prime}$ of $J_{m}^{\prime}$ is bounded from below:%
\begin{equation}
\left\vert j_{m,k}-j_{m,l}^{\prime}\right\vert \geq C\left\vert
2k-2l+1\right\vert  \label{E:thm}
\end{equation}
\end{proposition}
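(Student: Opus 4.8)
The plan is to chain the neighbor-estimates of the Lemma along the interlacing sequence of zeros. Recall that, with the stated convention on counting $x=0$, for every integer $m\ge0$ the zeros interlace,
\[
j'_{m,1}<j_{m,1}<j'_{m,2}<j_{m,2}<j'_{m,3}<\cdots,
\]
so that in this increasing sequence $j_{m,k}$ occupies position $2k$ and $j'_{m,l}$ occupies position $2l-1$. Hence $j_{m,k}$ and $j'_{m,l}$ are separated by exactly $|2k-2l+1|$ consecutive ``half-gaps,'' each of one of the two shapes $j_{m,j}-j'_{m,j}$ or $j'_{m,j+1}-j_{m,j}$. It therefore suffices to bound \emph{every} half-gap from below by one fixed constant $C>0$: adding the half-gaps lying between the two zeros then gives $|j_{m,k}-j'_{m,l}|\ge C\,|2k-2l+1|$. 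In particular Elbert's estimate~(\ref{E:Elbert}) is not needed for this argument.

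Consider first $m\ge1$. The Lemma already provides $j_{m,j}-j'_{m,j}\ge\sqrt2$ for $j\ge2$ and $j'_{m,j+1}-j_{m,j}\ge1$ for $j\ge1$; the only half-gap it does not cover is $j_{m,1}-j'_{m,1}$. For this one I would rerun the argument from the proof of the Lemma on the interval $(0,j_{m,1})$, where $J_m>0$ and attains its maximum at $j'_{m,1}$: since $j'_{m,1}>\sqrt{m(m+2)}>m$, the sign condition $t>m$ used in that argument still holds on $(j'_{m,1},j_{m,1})$, and one again obtains $j_{m,1}-j'_{m,1}\ge\sqrt2$. So, for $m\ge1$, every half-gap is $\ge1$ and one may take $C=1$.

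The case $m=0$ is handled by the same method. Here $j'_{0,1}=0$, so $j_{0,1}-j'_{0,1}=j_{0,1}>1$; for $j\ge2$ the Lemma's integral argument applies verbatim (it only simplifies, since $(t^2-m^2)/t=t$ when $m=0$) and gives $j_{0,j}-j'_{0,j}\ge\sqrt2$; and a slight variant of that argument — keeping $(t^2-m^2)/t=t$ and using that $x\mapsto(j'+x)/(2x)$ is decreasing for $x>0$ — yields
\[
1\le\frac{\bigl(1+j'_{0,j+1}/j_{0,j}\bigr)\,(j'_{0,j+1}-j_{0,j})^2}{4}.
\]
Since $j_{0,j}\ge j_{0,1}>1$, the assumption $j'_{0,j+1}-j_{0,j}<1$ would force $1+j'_{0,j+1}/j_{0,j}<3$ and hence make the right-hand side $<3/4<1$, a contradiction; so $j'_{0,j+1}-j_{0,j}\ge1$ as well. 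Thus $C=1$ works uniformly in $m\ge0$.

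The main obstacle is exactly this handling of the border cases at the lowest zeros — extending the Lemma's reasoning to the first interval $(0,j_{m,1})$ for $m\ge1$, and to the endpoint $j'_{0,1}=0$ for $m=0$ — where the Lemma as literally stated says nothing. Everything else is mechanical: once each neighbor-gap is bounded below by a constant, the interlacing turns this into the asserted spacing estimate $|j_{m,k}-j'_{m,l}|\ge C|2k-2l+1|$.
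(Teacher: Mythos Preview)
Your proof is correct but follows a genuinely different route from the paper's. The paper, for $m\ge1$, writes $j_{m,k}-j'_{m,l}$ as one neighbor-gap plus a gap between two zeros of $J_m$, and bounds the latter using Elbert's inequality $|j_{m,k}-j_{m,l}|>\pi|k-l|$; for $m=0$ it simply invokes the asymptotics (\ref{E:asympt1})--(\ref{E:asympt2}) to argue that suitable constants $C_0,C_1$ exist. You instead sum \emph{all} $|2k-2l+1|$ half-gaps directly and bound each one by a constant via the Lemma's integral argument, which you extend to the border cases the Lemma's stated range omits (the first half-gap $j_{m,1}-j'_{m,1}$ for $m\ge1$, and the $m=0$ case). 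Your approach buys two things: it dispenses with Elbert's external result, and it yields an explicit uniform constant $C=1$ for all $m\ge0$ rather than a nonconstructive one for $m=0$; it also cleanly covers the $l=1$ case that the paper's appeal to (\ref{E:lemma1}) (stated only for $k\ge2$) does not literally address. The paper's route is shorter if one is willing to cite Elbert and the asymptotics.
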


\begin{proof}
First consider any function $J_{m}$ for $m\geq 1.$ If $l\leq k,$ due to (\ref%
{E:lemma1}) and (\ref{E:Elbert}),
\begin{equation}
j_{m,k}-j_{m,l}^{\prime }\geq \sqrt{2}+\pi (k-l)  \notag
\end{equation}%
and (\ref{E:thm}) holds if one chooses $C=1$ (this is clearly not the
sharpest bound!). In the case when $l>k,$ using (\ref{E:lemma2}) and (\ref%
{E:Elbert}) we obtain
\begin{equation}
j_{m,l}^{\prime }-j_{m,k}\geq 1+\pi (l-k-1),  \notag
\end{equation}%
and (\ref{E:thm}) again holds if one chooses $C=1.$ This proves (\ref{E:thm}%
) for all roots of functions $J_{m}$ with $m\geq 1$. Now, let us consider
the function $J_{0}(x).$ Due to the asymptotic behavior of the roots (see
equations (\ref{E:asympt1}) and (\ref{E:asympt2})), there are constants $%
C_{0}$ and $C_{1}$ such that
\begin{align*}
\left\vert j_{0,k}-j_{m,n}\right\vert & \geq C_{0},\qquad k,n=1,2,3,... \\
j_{0,k}-j_{0,k}^{\prime }& \geq C_{1},\qquad k=1,2,3,... \\
j_{0,k+1}^{\prime }-j_{0,k}& \geq C_{1},\qquad k=1,2,3,...
\end{align*}%
with $C_{1}\leq C_{0}/2.$ Then the following inequality holds:%
\begin{equation}
\left\vert j_{0,k}-j_{0,l}^{\prime }\right\vert \geq C_{1}\left\vert
2k-2l+1\right\vert ,\qquad k,l=1,2,3,...  \notag
\end{equation}%
Set $C= \operatorname{min}(1,C_{1})$ and~(\ref{E:thm}) holds for all values of $%
m,l,k$ of interest.
\end{proof}

%------------------------------------------------------------------------------

\newpage
% \section*{References}

\end{document}